\setlist[enumerate]{itemsep=0.25\baselineskip, font=\normalfont}
\newcommand{\msc}[1]{\href{https://zbmath.org/classification/?q=#1}{#1}}
\crefname{main} {Theorem}       {Theorems}
\crefname{thm}  {Theorem}       {Theorems}
\crefname{lem}  {Lemma}         {Lemmas}
\crefname{prop} {Proposition}   {Propositions}
\crefname{dfn}  {Definition}    {Definitions}
\crefname{fig}  {Figure}        {Figures}
\crefname{tbl}  {Table}         {Tables}
\crefname{rmk}  {Remark}        {Remarks}
\crefname{exm}  {Example}       {Examples}
\crefname{que}  {Question}      {Questions}
\theoremstyle{plain}
\newtheorem{thm} {Theorem} [section]
\newtheorem{prop}   [thm] {Proposition}
\newtheorem{cor}    [thm] {Corollary}
\newtheorem{lem}    [thm] {Lemma}
\theoremstyle{definition}
\newtheorem{exm}    [thm] {Example}
\newtheorem{que}    [thm] {Question}
\numberwithin{equation}{section}
\newcommand*{\case}[1]{\paragraph{\indent\textit{#1}}}
\DeclarePairedDelimiterX\set[1]\lbrace\rbrace{\,\def\given{\mid}#1\,}
\DeclarePairedDelimiterX\gen[1]\langle\rangle{\,\def\given{\mid}#1\,}
\DeclareMathOperator{\Sub}{Sub}              
\DeclareMathOperator{\Cl}{Cl}                
\newcommand{\Agemo}{\mho}                    
\DeclareMathOperator{\Frat}{\Phi}            
\begin{document}

\title{The Modular Isomorphism Problem over all fields}

\author[L.~Margolis]{Leo Margolis
}
\address[Leo Margolis]
{Universidad Aut\'onoma de Madrid, Departamento de Matem\'aticas,  C/ Francisco Tom\'as y Valiente 7, Facultad de Ciencias, m\'odulo 17, 28049 Madrid, Spain.}
\email{leo.margolis@icmat.es}

\author[T.~Sakurai]{Taro Sakurai
}
\address[Taro Sakurai]
{Department of Mathematics and Informatics, Graduate School of Science, Chiba University, 1-33, Yayoi-cho, Inage-ku, Chiba-shi, Chiba, 263-8522, Japan.}
\email{tsakurai@math.s.chiba-u.ac.jp}

\subjclass[2020]{
    Primary \msc{20C05};
    Secondary \msc{16S34}, \msc{20D15}, \msc{16U60}%
}

\keywords{Isomorphism problem, group algebra, coefficient field, extension of field.}

\date{\today}

\begin{abstract}
    The Modular Isomorphism Problem asks, if an isomorphism between modular group algebras of finite $p$-groups over a field $F$ implies an isomorphism of the group bases.
    We explore the differences of knowledge on the problem when $F$ is either assumed to be a prime field or a general field of characteristic~$p$.
    After revising the literature and explaining reasons for the differences, we generalize some of the positive answers to the problem from the prime field case to the general case.
\end{abstract}

\maketitle


\section{Introduction}

The Modular Isomorphism Problem (MIP) asks, if the isomorphism type of a finite $p$-group $G$ can be recovered from the isomorphism type of the group algebra $FG$ over a field $F$ of characteristic~$p$, when $FG$ is viewed as an $F$-algebra.
The problem appears in the literature in two forms, the more common being:

\begin{description}
    \item[MIP over prime fields]
          Let $G$ be a finite $p$-group and $\mathbb{F}_p$ the field of $p$~elements.
          Does an isomorphism of group algebras $\mathbb{F}_p G \cong \mathbb{F}_p H$ for some group $H$ imply an isomorphism of groups $G \cong H$?
\end{description}

Sometimes though, e.g.\ in~\cite[Conjecture 9.4]{ZalesskiiMihalev73} or~\cite{Deskins56, Baginski92, Sandling96, BleherKimmerleRoggenkampWursthorn99}, it appears in the more general form on which we focus here:

\begin{description}[resume]
    \item[MIP over all fields]
          Let $G$ be a finite $p$-group and $F$ a field of characteristic~$p$.
          Does an isomorphism of group algebras $FG \cong FH$ as $F$-algebras for some group $H$ imply an isomorphism of groups $G \cong H$?
\end{description}

Though negative solutions to the Modular Isomorphism Problem in case $p=2$ have been found a few years ago~\cite{GarciaLucasMargolisDelRio22} and recently slightly generalized~\cite{MargolisSakurai25, BaginskiZabielski25}, those groups offer negative solutions for both formulations.
In particular, it remains unknown if for a fixed $G$ the two formulations of the problem are equivalent.
The first formulation is stronger than the second as tensoring $\mathbb{F}_p G \cong \mathbb{F}_p H$ with $F$ over $\mathbb{F}_p$ yields $FG \cong FH$ by \cite[Lemma~1.3.6]{Passman77}.
It is known though, that in the formulation of MIP over all fields it is sufficient to consider only finite fields~\cite{GarciaLucasDelRio24}.

The aim of this note is to first summarize the results available for MIP over all fields and provide a list of invariants which can be useful when studying the problem.
We then point out the differences between the results available for the two formulations and some reasons for them.
This includes an example of two algebras, given by modular group algebras modulo a power of augmentation ideals, which are non-isomorphic over a prime field but become isomorphic over a certain extension field.
Finally, we generalize some results from the literature which solve MIP over prime fields to show that they hold over any field (viz.,~\cref{thm:Metacyclic,thm:BaginskiKurdics,th:BrocheDelRio}).
The idea that this could be an interesting goal was also pointed out by Sandling, as recorded in Borge's thesis~\cite[p.~146]{Borge01}.

Our group-theoretical notation is mostly standard.
For a group $G$ we denote by $G'$ its derived subgroup and by $\gamma_n(G)$ the $n$-th term of the lower central series of $G$.
By $Z(G)$ we denote the center of $G$ and by $\Frat(G)$ the Frattini subgroup of $G$.
For elements $g, h \in G$ the centralizer of $g$ in $G$ is denoted by $C_G(g)$ and the commutator $g^{-1}h^{-1}gh$ is denoted by $[g, h]$.
A cyclic group of order $n$ is denoted by $C_n$.
For a non-negative integer $k$ set \[ \Agemo_k(G) = \gen[\big]{ g^{p^k} \given g \in G }, \ \ \Omega_k(G) = \gen[\big]{ g \in G \given g^{p^k} = 1 }.
\]
For a normal subgroup $N$ of $G$ set
\[
    \Omega_k(G:N) = \gen[\big]{ g \in G \given g^{p^k} \in N }.
\]
We will also write $\Agemo(G)$ for $\Agemo_1(G)$.

For group algebras we denote by $\Delta(FG)$ the augmentation ideal of $FG$ which in our case coincides with the Jacobson radical.
We write the $n$-th power of the augmentation ideal as $\Delta^n(FG)$.
By $[FG,FG]FG$ we denote the ideal generated by the Lie commutators $\set{ ab-ba \given a,b \in FG }$.
For a subgroup $U \leq G$ the relative augmentation ideal of $U$ in $FG$ is the ideal generated by $\set{ u-1 \given u \in U }$ and when $U$ is normal in $G$ we denote it by $\Delta(U)FG$.
We note that in our case $\Delta(G')FG = [FG,FG]FG$.
Moreover, we recall that if $U$ is normal, we have a natural isomorphism $FG/\Delta(U)FG \cong F[G/U]$ of $F$-algebras.

Throughout the whole article $G$ denotes a finite $p$-group and $F$ a field of characteristic~$p$.
All fields are assumed to be of characteristic $p > 0$.

\section{Known results}
\subsection{Classes and invariants}

We collect the classes for which the MIP is known over all fields and also the invariants which are known in this case.
It turns out that a majority of the results available concerns the case of $2$-groups only, and also that many early results on the MIP concern only the prime field case and do not enter here.

Parts of the following compilation appeared also in~\cite{Margolis22}.

\begin{thm}\label{th:KnownClasses}
    Assume $G$ is a finite $p$-group, $F$ a field of characteristic~$p$ and $H$ a group such that $FG \cong FH$.
    If $G$ is among the groups in the following list, then $G \cong H$.
    \begin{enumerate}
        \item Groups with $[G:Z(G)] \leq p^2$ \cite{Drensky89}, \label{it:Drensky}
        \item two-generated groups of nilpotency class two for $p$ odd \cite{BrocheDelRio21}\footnote{The result is stated only over prime fields, but the arguments applied in the case of odd $p$ work for any field.
                  For~$p = 2$, see~\cref{th:BrocheDelRio}.
              },
        \item Groups of nilpotency class two with cyclic center and either $p$ is odd or $p=2$ and for $G/Z(G) \cong C_{2^{m_1}} \times \dotsb \times C_{2^{m_k}}$ with $m_1 \geq m_2 \geq \dotsb \geq m_k$ one has $m_1 > m_3$ \cite{GarciaLucasMargolis24}.
    \end{enumerate}

    Moreover, for $p=2$ in the following cases the statement also holds.
    \begin{enumerate}[resume]
        \item $2$-groups of maximal class \cite{Baginski92},
        \item Groups of order $32$ \cite[Lemma 3.7]{NavarroSambale18},
        \item $2$-groups of nilpotency class three such that $[G:Z(G)] = |\Frat(G)| = 8$ \cite{MargolisSakuraiStanojkovski23},
        \item $2$-groups with cyclic center such that $G/Z(G)$ is dihedral \cite{MargolisSakuraiStanojkovski23},
        \item two-generated $2$-groups such that $G/Z(G)$ is non-abelian dihedral with the exception of known counterexamples to the MIP \cite{MargolisSakurai25}.
    \end{enumerate}
\end{thm}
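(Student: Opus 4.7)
The theorem is a synthesis of existing results, so the plan is an item-by-item verification rather than a single unified argument. For each entry \textup{(1)}--\textup{(8)} I would go back to the cited source and check whether the given proof produces an isomorphism $G \cong H$ assuming only $FG \cong FH$ as $F$-algebras for an \emph{arbitrary} $F$ of characteristic~$p$, or whether it silently uses something specific to the prime field $\mathbb{F}_p$. The expected outcome is that most of the original proofs generalize with only notational changes, that item~\textup{(2)} needs the explicit remark recorded in the footnote of the statement, and that the natural companion generalizations (including the $p=2$ case of two-generated nilpotency class two groups) are supplied later in this paper as \cref{thm:Metacyclic,thm:BaginskiKurdics,th:BrocheDelRio}.

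The order of verification would be as follows. For~\textup{(1)} Drensky's proof is formulated entirely in terms of canonical subalgebras and ideals of $FG$---the image of $Z(G)$, the ideal $[FG,FG]FG = \Delta(G')FG$, powers of $\Delta(FG)$, and Jennings-type quotients---all of which survive base change, so it extends to arbitrary $F$ without modification. For~\textup{(2)} with odd~$p$, Broche--del R\'{\i}o recover $G$ from invariants read off $\Delta^n(FG)/\Delta^{n+1}(FG)$ together with $p$-th power and commutator identities inside $FG$; no prime-field hypothesis is actually used in their argument for odd~$p$, which justifies the footnote. Item~\textup{(3)} already appears in \cite{GarciaLucasMargolis24} over a general field. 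The $2$-group cases~\textup{(4)}--\textup{(7)} each proceed by an explicit canonical decomposition of $FG$ (or of a canonical quotient thereof) from which $G$ is reconstructed; inspection of \cite{Baginski92, NavarroSambale18, MargolisSakuraiStanojkovski23} shows that every step is phrased in $F$-algebra terms and carries over verbatim to an arbitrary coefficient field. Finally, item~\textup{(8)} is covered by \cite{MargolisSakurai25}, whose main theorem is already stated over general $F$.

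The chief obstacle in this verification is to distinguish \emph{accidentally} prime-field arguments from \emph{genuinely} prime-field ones. A typical pitfall is a counting or listing of elements of $FG$ with prescribed coefficients (which depends on $|F|$), or a comparison of automorphism groups that are $\mathbb{F}_p$-linear but not $F$-linear. Such steps must be replaced by invariants extracted using only the $F$-algebra structure. Concretely, in each case the check to be performed is that every ideal or quotient on which a group-theoretic invariant of $G$ is read off is genuinely canonical in $FG$---powers of $\Delta(FG)$, the ideal $[FG,FG]FG$, relative augmentation ideals $\Delta(N)FG$ of characteristic normal subgroups~$N$, and the Jennings-type filtration quotients---since these are the structures that every $F$-algebra isomorphism $FG \cong FH$ must preserve and that behave correctly under extension of scalars from $\mathbb{F}_p$ to $F$.
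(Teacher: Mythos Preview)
Your proposal is correct and matches the paper's treatment: the theorem is stated as a compilation of known results with citations, and no separate proof is given in the paper beyond those references and the footnote you identified for item~(2). Your item-by-item verification plan is precisely the implicit justification the paper relies on.
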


Subclasses of the groups mentioned in the previous theorem were also considered in the literature and the MIP was proven there over all fields~\cite{Deskins56, Coleman64, NacevMollov81}.
We notice that for small $2$-groups MIP over all fields is hence known until order 32, as groups of order at most 16 either have a center of index at most $p^2$ or are of maximal class.

We next summarize the group-theoretical invariants available over all fields.
Here, for a field $F$, a property of $G$ or $FG$ is called \emph{$F$-invariant}, if an isomorphism of $F$-algebras $FG \cong FH$ implies the same property for $H$ or $FH$.
E.g.\ the order of~$G$ is the dimension of~$FG$ as an $F$-vector space and as such an $F$-invariant.
Many of the invariants have been known for decades, though we will not include original references for all of them.

The most productive source to obtain $F$-invariants has been to consider ideals or subalgebras of $FG$ defined independently of the chosen basis $G$ and then to consider powers, quotients and intersections involving these objects.
E.g.\ the most classical $F$-invariant has its roots in the work of Jennings~\cite{Jennings41} and expresses the dimensions of quotients of subsequent powers of the Jacobson radical of $FG$ as certain sections of $G$.
Namely Jennings showed that the dimension subgroup \[ D_n(G) = G \cap (1 + \Delta^n(FG)) \] can be expressed in purely group-theoretical terms which was later translated by Lazard to the expression $D_n(G) = \prod_{ip^j \geq n} \Agemo_j(\gamma_i(G))$, see~\cite[Theorem~11.1.20]{Passman77}.
Moreover, Jennings also showed that the isomorphism type of $D_n(G)/D_{n+1}(G)$ is an $F$-invariant.
This is one of the basic invariants included already in Passman's classical book on the subject.

\begin{prop}[{\cite[Lemma 14.2.7]{Passman77}}]\label{prop:BasicInvariants}
    The following are $F$-invariants of $FG$.
    \begin{enumerate}
        \item The isomorphism type of $G/G'$.
        \item The isomorphism type of $Z(G)$.
        \item The isomorphism type of $D_n(G)/D_{n+1}(G)$ for any $n \geq 1$.
    \end{enumerate}
\end{prop}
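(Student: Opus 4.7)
The plan is to handle the three claims separately, in each case by identifying an intrinsic substructure of $FG$ as an $F$-algebra that captures the desired invariant. For \emph{(1)}, the two-sided ideal $[FG,FG]FG$ generated by the Lie commutators $\set{ab-ba \given a,b \in FG}$ is defined purely by the $F$-algebra structure of $FG$, and is therefore preserved by any $F$-algebra isomorphism $FG \cong FH$. Since $[FG,FG]FG = \Delta(G')FG$, the quotient $FG/[FG,FG]FG \cong F[G/G']$ is an $F$-invariant, and to pass back to the group $G/G'$ I would invoke the theorem of Deskins~\cite{Deskins56}, which solves MIP for finite abelian $p$-groups over every field of characteristic $p$.

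For \emph{(3)}, since $\Delta(FG)$ is the Jacobson radical of $FG$, every power $\Delta^n(FG)$ is intrinsic to the $F$-algebra structure, and hence $d_n \coloneqq \dim_F \Delta^n(FG)/\Delta^{n+1}(FG)$ is an $F$-invariant. By Jennings' theorem, $D_n(G)/D_{n+1}(G)$ is elementary abelian of rank $d_n$, and since elementary abelian $p$-groups are classified by rank, its isomorphism type is determined.

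Part \emph{(2)} is the most delicate step and I expect it to be the main obstacle. The plan is to work inside the intrinsic commutative local $F$-subalgebra $Z(FG)$, whose unique maximal ideal is $Z(FG) \cap \Delta(FG)$, and to extract the isomorphism type of the finite abelian $p$-group $Z(G)$ from numerical invariants of $Z(FG)$. Since $Z(G)$ is determined up to isomorphism by the sequence of orders $|\Omega_k(Z(G))|$, the goal is to read these off from intrinsic algebraic data. The obstacle is that over $F \supsetneq \mathbb{F}_p$ the ambient group $1 + (Z(FG) \cap \Delta(FG))$ is strictly larger than $Z(G)$, so $Z(G)$ is not distinguished as a subgroup by the $F$-algebra structure alone (detecting group-likes would require the coalgebra structure of $FG$). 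The classical argument of Passman~\cite[Lemma~14.2.7]{Passman77}, which I would follow, bypasses this by computing Ulm-type invariants of $Z(G)$ through the interaction between the augmentation filtration on $Z(FG)$ and the Frobenius map, using the identity $(z-1)^{p^k} = z^{p^k} - 1$ valid in the commutative ring $Z(FG)$ of characteristic $p$.
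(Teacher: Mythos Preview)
The paper does not give its own proof of this proposition; it simply cites Passman's book. So there is no in-paper argument to compare against, and your outline for parts (1) and (2) is essentially the standard one that Passman records: pass to $FG/[FG,FG]FG \cong F[G/G']$ and invoke the abelian case for (1), and for (2) exploit the Frobenius on the commutative local algebra $Z(FG)$ to extract the Ulm invariants of $Z(G)$.

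There is, however, a concrete error in your treatment of (3). It is not true that $D_n(G)/D_{n+1}(G)$ has rank $d_n = \dim_F \Delta^n(FG)/\Delta^{n+1}(FG)$. Already for $G = C_p \times C_p$ one has $D_2(G)=1$, so $e_2 := \log_p|D_2/D_3| = 0$, while $d_2 = \dim_F \Delta^2/\Delta^3 = 3$ when $p$ is odd. What Jennings actually proves is the generating-function identity
\[
\sum_{n\geq 0} d_n\, t^n \;=\; \prod_{n\geq 1}\left(\frac{1-t^{np}}{1-t^n}\right)^{e_n},
\]
from which the sequence $(e_n)$ is determined recursively by the sequence $(d_n)$. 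Since each $D_n/D_{n+1}$ is elementary abelian of rank $e_n$, this recovers its isomorphism type. Your strategy is therefore salvageable, but the step ``rank $= d_n$'' must be replaced by this recursion (or, equivalently, by the Jennings basis argument showing that a homogeneous $F$-basis of $\Delta^n/\Delta^{n+1}$ is built from products of representatives of bases of the $D_i/D_{i+1}$).
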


We note that $D_2(G) = \Frat(G)$ and hence the minimal number of generators of $G$, which equals the rank of $G/\Frat(G)$, is an $F$-invariant.

Several more $F$-invariants can be derived from these type of arguments as demonstrated for example in \cite{Sandling85, MargolisSakuraiStanojkovski23}, but most, if not all, of them were generalized in an elegant way by Garc\'ia-Lucas in \cite{GarciaLucas24}.
The main ingredient in his results is the following.

\begin{thm}[{``Transfer Lemma'' \cite[Lemma 3.6]{GarciaLucas24}}]\label{th:TransferLemma}
    Let $\varphi\colon FG \to FH$ be an $F$-algebra isomorphism.
    Moreover, let $N_G$, $L_G$ be normal subgroups of $G$ and $N_H$, $L_H$ normal subgroups of $H$ such that $N_G$ and $N_H$ contain the derived subgroup of $G$ and $H$, respectively.
    Assume that \[ \text{$\varphi(\Delta(N_G)FG) = \Delta(N_H)FH$ and $\varphi(\Delta(L_G)FG) = \Delta(L_H)FH$.
        }
    \]
    Then for each non-negative integer $k$ the following equalities hold.
    \begin{enumerate}
        \item $\varphi(\Delta(\Omega_k(G:N_G))FG) = \Delta(\Omega_k(H:N_H))FH$.     
        \item $\varphi(\Delta(\Agemo_k(L_G)N_G)FG) = \Delta(\Agemo_k(L_H)N_H)FH$.   
        \item $\varphi(\Delta(\Omega_k(Z(G))N_G)FG) = \Delta(\Omega_k(Z(H))N_H)FH$. 
    \end{enumerate}
\end{thm}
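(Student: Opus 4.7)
The plan is to reduce each of the three assertions to a computation in the commutative quotient algebra.

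\emph{Reduction.} Using $\varphi(\Delta(N_G)FG) = \Delta(N_H)FH$ together with the natural isomorphism $FG/\Delta(N_G)FG \cong F[G/N_G]$, the map $\varphi$ induces an $F$-algebra isomorphism $\bar\varphi\colon F[G/N_G] \to F[H/N_H]$ between commutative group algebras of abelian $p$-groups (using $G' \leq N_G$ and $H' \leq N_H$). Each of the three ideals in the conclusion contains $\Delta(N_G)FG$, so the three statements reduce to corresponding assertions about~$\bar\varphi$.

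\emph{Intrinsic characterizations.} For an abelian $p$-group $A$ and a subgroup $B \leq A$ I would establish inside $FA$ the two intrinsic descriptions
\[
\Delta(\Omega_k(A))FA = \set{ x \in \Delta(FA) \given x^{p^k} = 0 }
\]
and that $\Delta(\Agemo_k(B))FA$ is the ideal generated by $\set{ y^{p^k} \given y \in \Delta(B)FA }$. Both rest on the identity $(a-1)^{p^k} = a^{p^k} - 1$ and Frobenius additivity in a commutative characteristic-$p$ ring. The nontrivial inclusion in the first description is shown by decomposing an arbitrary $x \in \Delta(FA)$ with $x^{p^k} = 0$ along the fibers of the $p^k$-power map $A \to \Agemo_k(A)$ and using that in a field $\alpha^{p^k} = 0$ forces $\alpha = 0$, so the fiber-coefficient sums vanish and can be rewritten as $F$-combinations of $\omega - 1$ with $\omega \in \Omega_k(A)$.

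\emph{Derivation of~(1) and~(2).} Part~(1) follows from $\Omega_k(G:N_G)/N_G = \Omega_k(G/N_G)$ and the first description, since $\bar\varphi$ is a ring isomorphism. For part~(2), the hypothesis on $L_G$ gives that $\bar\varphi$ preserves $\Delta(L_G N_G/N_G)F[G/N_G]$, and then the second description transports this to its Frobenius-generated extension $\Delta(\Agemo_k(L_G)N_G/N_G)F[G/N_G]$.

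\emph{Part~(3), the main obstacle.} The difficulty here is that $\Omega_k(Z(G))N_G/N_G$ is merely the image of $\Omega_k(Z(G))$ under $Z(G) \twoheadrightarrow Z(G)N_G/N_G$, and in general is a \emph{proper} subgroup of $\Omega_k(Z(G)N_G/N_G)$; so the characterization in $F[G/N_G]$ alone does not isolate it. The additional input I would exploit is the invariance of the center~$Z(FG)$ under $\varphi$. Because $G' \leq N_G$, every conjugacy class $C$ with $|C| = p^m > 1$ is contained in a single $N_G$-coset, so its class sum projects to $|C| \cdot gN_G = 0$ in $F[G/N_G]$. Hence the image of $Z(FG)$ in $F[G/N_G]$ is precisely the subalgebra $F[Z(G)N_G/N_G]$, and $\bar\varphi$ restricts to an isomorphism between these two subalgebras. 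The crux is then to pin down $\Delta(\Omega_k(Z(G))N_G/N_G)$ inside $F[Z(G)N_G/N_G]$ by a condition that singles out the image of $\Omega_k(Z(G))$ rather than the full $p^k$-torsion of $Z(G)N_G/N_G$. I would attempt this by lifting through the surjection $FZ(G) \twoheadrightarrow F[Z(G)N_G/N_G]$ and applying the Omega-characterization inside $FZ(G)$ itself; verifying that the relevant subalgebra structure of $FZ(G)$ is visible from the invariant data $Z(FG)$ and $\Delta(N_G)FG$ is the most delicate step and is where I expect the main difficulty to lie.
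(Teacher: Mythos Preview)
The paper does not prove this statement; it is quoted from \cite[Lemma~3.6]{GarciaLucas24} and used as a black box, so there is no proof in the present paper to compare your attempt against.

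On the content of your sketch: your reductions for (1) and (2) via the commutative quotient $F[G/N_G]$ and the intrinsic descriptions of the $\Omega_k$- and $\Agemo_k$-ideals are the standard ones and are correct. For (3) you correctly locate the obstruction, but your proposed fix---lifting from $F[Z(G)N_G/N_G]$ back to $FZ(G)$---does not work as stated, because $\varphi$ need not carry the subalgebra $FZ(G)$ to $FZ(H)$; it only preserves $Z(FG)$. The missing observation is that the \emph{subspace} $[FG,FG]$ (the $F$-span of commutators, not the ideal it generates) is automatically preserved by any $F$-algebra isomorphism, hence so is the ideal $Z(FG)\cap[FG,FG]$ of $Z(FG)$. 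Using the decomposition $Z(FG)=FZ(G)\oplus(Z(FG)\cap[FG,FG])$ one obtains a $\varphi$-equivariant identification $Z(FG)/(Z(FG)\cap[FG,FG])\cong FZ(G)$, and your $\Omega_k$-characterisation can then be applied inside $FZ(G)$ itself. Since $Z(FG)\cap[FG,FG]\subseteq\Delta(G')FG\subseteq\Delta(N_G)FG$, multiplying the resulting invariant ideal of $Z(FG)$ up to $FG$ and adding $\Delta(N_G)FG$ yields exactly $\Delta(\Omega_k(Z(G))N_G)FG$. This mechanism is the one visible in the paper's proof of \cref{prop:TransferLemmaApplications}, where the ambient algebra is taken to be $FM_G\oplus(Z(FG)\cap[FG,FG])$.
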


Together with the following this will allow us to find many $F$-invariants.

\begin{prop}\label{prop:TransferLemmaApplications}
    Assume the notation of \cref{th:TransferLemma}.
    Moreover, let $M_G$ and $M_H$ be central subgroups of $G$ and $H$, respectively.
    Assume that \[ \varphi(FM_G \oplus (Z(FG) \cap [FG, FG])) = FM_H \oplus (Z(FH) \cap [FH, FH]).
    \]
    Then the following isomorphisms hold:
    \begin{enumerate}
        \item $D_n(L_G)/D_{n+1}(L_G) \cong D_n(L_H)/D_{n+1}(L_H)$ for every positive integer $n$.
        \item $G/L_GN_G \cong H/L_HN_H$ and $L_GN_G/N_G \cong L_HN_H/N_H$.
        \item $M_G \cap L_G \cong M_H \cap L_H$ and $M_G/(M_G \cap L_G) \cong M_H/(M_H \cap L_H)$.
    \end{enumerate}
\end{prop}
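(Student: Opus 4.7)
The plan is to handle the three items in turn, each time combining the Transfer Lemma with a classical ingredient: Jennings' theorem for~(1), the MIP for abelian $p$-groups (which is valid over any field of characteristic~$p$, cf.\ \cite{Deskins56}) for~(2), and the same abelian input together with the direct-sum hypothesis for~(3).

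For~(1), since $L_G \trianglelefteq G$ we have $(\Delta(L_G)FG)^n = \Delta^n(FL_G)\cdot FG$, and any transversal of $L_G$ in $G$ exhibits the $n$-th quotient of this filtration as a free module over $FG/\Delta(L_G)FG$ of rank $\dim_F\Delta^n(FL_G)/\Delta^{n+1}(FL_G)$. Since $\varphi$ preserves the filtration $\{(\Delta(L_G)FG)^n\}$ and $[G:L_G]=\dim_F FG/\Delta(L_G)FG=[H:L_H]$, the Jennings--Lazard Poincar\'e series of $FL_G$ and $FL_H$ agree. Jennings' theorem then forces $\dim_{\mathbb{F}_p}D_n(L_G)/D_{n+1}(L_G)=\dim_{\mathbb{F}_p}D_n(L_H)/D_{n+1}(L_H)$, which is enough since both sides are elementary abelian $p$-groups.

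For~(2), since $N_G \supseteq G'$ the group $G/L_G N_G$ is abelian, and $\Delta(L_G N_G)FG=\Delta(L_G)FG+\Delta(N_G)FG$ is preserved by $\varphi$, so $F[G/L_G N_G]\cong F[H/L_H N_H]$; the MIP for abelian $p$-groups gives the first isomorphism. For the second, pass to the induced $F$-algebra isomorphism $F[G/N_G]\cong F[H/N_H]$ and apply the argument of~(1) inside this abelian algebra to the subgroup $L_G N_G/N_G$; this yields the $\mathbb{F}_p$-ranks of the $\Agemo_k$-layers of $L_G N_G/N_G$, which determine an abelian $p$-group up to isomorphism.

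For~(3), which I expect to be the main obstacle, centrality of $M_G$ makes $Z(FG)\cap[FG,FG]$ an ideal of $W_G:=FM_G\oplus(Z(FG)\cap[FG,FG])$ with quotient $FM_G$, so since any $F$-algebra isomorphism preserves both the centre and the space of Lie commutators, $\varphi$ induces an $F$-algebra isomorphism $\bar\varphi\colon FM_G\xrightarrow{\cong}FM_H$. The technical crux is to show that $\bar\varphi$ carries $FM_G\cap\Delta(L_G)FG=\Delta(M_G\cap L_G)FM_G$ onto $\Delta(M_H\cap L_H)FM_H$; the difficulty is that the image of an element of $FM_G$ under $\bar\varphi$ is only defined modulo $Z(FH)\cap[FH,FH]$, and this ambiguity must be controlled using the preservation of $\Delta(N_G)FG$ together with the inclusion $Z(FG)\cap[FG,FG]\subseteq\Delta(N_G)FG$ that comes from $G'\subseteq N_G$. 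Once the claim is established, MIP for abelian $p$-groups applied to $FM_G/\Delta(M_G\cap L_G)FM_G\cong F[M_G/(M_G\cap L_G)]$ and to $F(M_G\cap L_G)$ yields the two claimed isomorphisms.
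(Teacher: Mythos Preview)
Your treatment of~(1) and~(2) is fine and essentially matches the paper's references to \cite[Lemma~3.2]{GarciaLucas24}: the Jennings filtration on $(\Delta(L_G)FG)^n$ and the passage to the abelian quotients $F[G/L_GN_G]$, $F[G/N_G]$ are exactly the moves the paper relies on.

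For~(3), however, there are two genuine gaps. First, you correctly isolate the crux---showing that the induced map $\bar\varphi\colon FM_G\to FM_H$ carries $\Delta(M_G\cap L_G)FM_G$ onto $\Delta(M_H\cap L_H)FM_H$---but you do not prove it, and your proposed mechanism (``control the ambiguity via $Z(FG)\cap[FG,FG]\subseteq\Delta(N_G)FG$'') does not obviously help: the ideal you need to interact with is $\Delta(L_G)FG$, not $\Delta(N_G)FG$, and there is no hypothesis $G'\subseteq L_G$. Second, and more concretely, even granting the crux your last line is not enough: knowing that $\bar\varphi$ preserves the \emph{ideal} $\Delta(M_G\cap L_G)FM_G$ does not give an $F$-algebra isomorphism $F(M_G\cap L_G)\cong F(M_H\cap L_H)$, so the abelian MIP cannot be invoked for $M_G\cap L_G$ as written. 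You only get $M_G/(M_G\cap L_G)\cong M_H/(M_H\cap L_H)$ and $|M_G\cap L_G|=|M_H\cap L_H|$, which does not determine the isomorphism type.

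The paper avoids both issues by never passing to the quotient $FM_G$. It works inside $W_G=FM_G\oplus(Z(FG)\cap[FG,FG])$ throughout and, following \cite[\S2.2]{MargolisSakuraiStanojkovski23}, defines for each $k\ge0$ the ideal
\[
\Theta(FG)=\Delta(\Agemo_k(M_G\cap L_G))FM_G\oplus(Z(FG)\cap[FG,FG]),
\]
which it identifies as $(\Delta(M_G\cap L_G)FM_G\oplus(Z(FG)\cap[FG,FG]))^{p^k}\,FM_G+(Z(FG)\cap[FG,FG])$ and hence as built from $\varphi$-invariant data. From $W_G/\Theta(FG)\cong F[M_G/\Agemo_k(M_G\cap L_G)]$ one reads off $|\Agemo_k(M_G\cap L_G)|=|\Agemo_k(M_H\cap L_H)|$ for every $k$, and this sequence of orders determines the abelian group $M_G\cap L_G$ up to isomorphism. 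The varying-$k$ step is precisely what replaces your unjustified appeal to $F(M_G\cap L_G)$.
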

\begin{proof}
    All except the last can be already found in~\cite[Lemma 3.2]{GarciaLucas24}.
    The last statement for $M_G = Z(G)$ can be also found in~\cite[Lemma 3.2(5)]{GarciaLucas24}, but we take this opportunity to record a slightly more general statement.
    The third is ``dual'' to the second in some sense;
    see~\cref{fig:subgroups}.
    \begin{figure}[htbp]
        \includegraphics{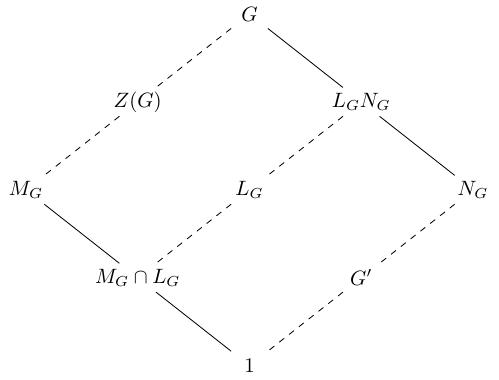}
        \caption{
            Subgroups in \cref{prop:TransferLemmaApplications}.
            The solid lines depict obtained $F$-invariants.
        }
        \label{fig:subgroups}
    \end{figure}

    Since the proof goes in the same way as in~\cite[\S~2.2]{MargolisSakuraiStanojkovski23}, we only provide a rough outline here.
    The main idea is to consider sequences \[ M_G \geq M_G \cap L_G \geq \Agemo_k(M_G \cap L_G) \geq 1, \] \[ FM_G \geq \Delta(M_G \cap L_G)FM_G \geq \Delta(\Agemo_k(M_G \cap L_G))FM_G \geq 0 \] for $k \geq 0$, which corresponds to~\cite[Fig.~3]{MargolisSakuraiStanojkovski23}.
    Technically speaking, we consider an algebra $FM_G \oplus (Z(FG) \cap [FG, FG])$ and an ideal \[ \Theta(FG) = \Delta(\Agemo_k(M_G \cap L_G))FM_G \oplus (Z(FG) \cap [FG, FG]), \] which corresponds to~\cite[(2.4)]{MargolisSakuraiStanojkovski23}.
    Then $\varphi(\Theta(FG)) = \Theta(FH)$ from assumptions since $\Theta(FG)$ can be written as
    \[
        (\Delta(M_G \cap L_G)FM_G \oplus (Z(FG) \cap [FG, FG]))^{p^k}FM_G + (Z(FG) \cap [FG, FG]),
    \]
    which corresponds to~\cite[(2.2)]{MargolisSakuraiStanojkovski23}.
    From
    \[
        F[M_G/\Agemo_k(M_G \cap L_G)]
        \cong (FM_G \oplus (Z(FG) \cap [FG, FG]))/\Theta(FG),
    \]
    we get $M_G/(M_G \cap L_G) \cong M_H/(M_H \cap L_H)$ by setting $k = 0$ and \cref{th:KnownClasses}(\ref{it:Drensky}).
    Since $|\Agemo_k(M_G \cap L_G)| = |\Agemo_k(M_H \cap L_H)|$ for all $k \geq 0$, we have $M_G \cap L_G \cong M_H \cap L_H$ by \cite[Proposition~1.1]{MargolisSakuraiStanojkovski23}.
\end{proof}

Of course to apply \cref{th:TransferLemma} and \cref{prop:TransferLemmaApplications} one needs a starting choice for $M_G$, $N_G$ and $L_G$.
For $N_G$ and $L_G$, clearly $G$ can be one of these subgroups and as $\Delta(G')FG$ equals the ideal of $FG$ generated by Lie commutators also $G'$ has the needed property, which in \cite{GarciaLucas24} is called a canonical assignation.
Starting with those choices one can then obtain many $F$-invariants iteratively, some of which we list in the following corollary.

\begin{cor}[{\cite[Example 3.7(3)]{GarciaLucas24}}]\label{cor:TransferLemmaInvariants}
    The isomorphism types of the following abelian sections of $G$ are $F$-invariants for every non-negative integer $k$.
    \begin{enumerate}
        \item $Z(G) \cap \Agemo_k(G)G'$ and $Z(G)\Agemo_k(G)G'/\Agemo_k(G)G'$.
        \item $G/\Agemo_k(Z(G))G'$ and $\Agemo_k(Z(G))G'/G'$.
        \item $G/\Omega_k(Z(G))G'$ and $\Omega_k(Z(G))G'/G'$.
    \end{enumerate}
    In particular, $Z(G) \cap G'$, $Z(G)G'/G'$, $G/Z(G)G'$ and $Z(G)G'/G'$ are $F$-invariants.
\end{cor}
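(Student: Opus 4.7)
The plan is to apply the Transfer Lemma (\cref{th:TransferLemma}) together with \cref{prop:TransferLemmaApplications}, iterating from the canonical pair $N_G = G'$, $L_G = G$, which is preserved by any $F$-algebra isomorphism $\varphi \colon FG \to FH$ because $\Delta(G')FG = [FG,FG]FG$ is the Lie commutator ideal and $\Delta(G)FG$ is the Jacobson radical. To apply \cref{prop:TransferLemmaApplications}(3) with $M_G = Z(G)$ the required compatibility $\varphi(FZ(G)\oplus(Z(FG)\cap[FG,FG])) = FZ(H)\oplus(Z(FH)\cap[FH,FH])$ is automatic, since both sums coincide with the centres $Z(FG)$ and $Z(FH)$: decomposing a central element into class sums $\hat C$, the singleton classes lie in $FZ(G)$, while each non-singleton class of the $p$-group $G$ has size a positive power of $p$, which forces $\hat C \in \Delta(G')FG$.

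For part (1), \cref{th:TransferLemma}(2) applied to the canonical pair yields $\varphi(\Delta(\Agemo_k(G)G')FG) = \Delta(\Agemo_k(H)H')FH$, and then \cref{prop:TransferLemmaApplications}(3) with $M_G = Z(G)$ and $L_G = \Agemo_k(G)G'$ directly extracts $Z(G)\cap \Agemo_k(G)G'$ together with $Z(G)/(Z(G)\cap\Agemo_k(G)G') \cong Z(G)\Agemo_k(G)G'/\Agemo_k(G)G'$ as $F$-invariants. For part (3), \cref{th:TransferLemma}(3) applied to the same canonical pair gives $F$-invariance of $\Delta(\Omega_k(Z(G))G')FG$, after which \cref{prop:TransferLemmaApplications}(2) with $L_G = \Omega_k(Z(G))G'$ and $N_G = G'$ produces the invariants $G/\Omega_k(Z(G))G'$ and $\Omega_k(Z(G))G'/G'$.

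Part (2) needs one extra iteration. First choose $k$ large enough in \cref{th:TransferLemma}(3) that $\Omega_k(Z(G)) = Z(G)$; this promotes $Z(G)G'$ to a canonical assignation. Then \cref{th:TransferLemma}(2) with $L_G = Z(G)G'$ and $N_G = G'$ yields invariance of $\Delta(\Agemo_k(Z(G)G')G')FG$, which equals $\Delta(\Agemo_k(Z(G))G')FG$ thanks to the identity
\[
    \Agemo_k(Z(G)G')\,G' = \Agemo_k(Z(G))\,G',
\]
valid since $(zc)^{p^k} = z^{p^k}c^{p^k}$ whenever $z \in Z(G)$ commutes with $c \in G'$. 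A final invocation of \cref{prop:TransferLemmaApplications}(2) with this $L_G$ and $N_G = G'$ delivers $G/\Agemo_k(Z(G))G'$ and $\Agemo_k(Z(G))G'/G'$. The ``in particular'' clause follows by specialising $k = 0$ in (2) and (3) and by taking $k$ large enough that $\Agemo_k(G) = 1$ in (1). I do not foresee any genuinely technical obstacle; the only delicate point is the bookkeeping of which subgroups have become canonical assignations after each application of the Transfer Lemma, so that the hypotheses of the subsequent applications are legitimately in place.
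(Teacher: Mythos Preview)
Your proposal is correct and follows exactly the iterative scheme the paper indicates just before the corollary: start from the canonical assignations $G$ and $G'$, feed them through \cref{th:TransferLemma}(2),(3) to manufacture $\Agemo_k(G)G'$, $\Omega_k(Z(G))G'$, $Z(G)G'$ and then $\Agemo_k(Z(G))G'$, and read off the sections via \cref{prop:TransferLemmaApplications}. One cosmetic point: in verifying the $M_G = Z(G)$ hypothesis you should place the non-singleton class sums in the Lie span $[FG,FG]$ rather than in the ideal $\Delta(G')FG$ (the same ``class length is a positive $p$-power'' argument does this, since $g - g^h = h(h^{-1}g) - (h^{-1}g)h$), and in the ``in particular'' clause $k = 0$ in (3) is vacuous---take $k$ large there instead.
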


The previous corollary does not exceed the possible applications of \cref{th:TransferLemma} and \cref{prop:TransferLemmaApplications}.
E.g.\ we will use the first points of \cref{th:TransferLemma} and \cref{prop:TransferLemmaApplications} in the proof of \cref{th:BrocheDelRio}.
Also the following structural property can be seen as a consequence of the above results.

\begin{prop}[{\cite[Theorem 4.1]{MargolisSakuraiStanojkovski23}}]
    Assume $FG \cong FH$ and that $G = A \times U$ and $H = B \times V$ such that $A$ and $B$ are elementary abelian and no larger elementary abelian direct factors of $G$ and $H$ exist, respectively.
    Then $A \cong B$ and $FU \cong FV$.
\end{prop}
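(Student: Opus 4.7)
The proposition has two conclusions, $A \cong B$ and $FU \cong FV$, and I expect the second to be where the real difficulty lies.

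For $A \cong B$, my plan is to realize $|A|$ as an explicit $F$-invariant. The group-theoretic input I would use is that for every finite $p$-group $X$ the rank of its largest elementary abelian direct factor equals $\log_p|\Omega_1(Z(X))\Frat(X)/\Frat(X)|$: a central element of order $p$ outside $\Frat(X)$ extends to a minimal generating set of $X$ and hence generates a cyclic direct factor, while if $X = E \times Y$ with $E$ elementary abelian of maximal rank then $\Omega_1(Z(Y)) \leq \Frat(Y)$, so the quotient contributes only the $E$-part. Both $|\Omega_1(Z(G))|$ and $|\Omega_1(Z(G)) \cap \Frat(G)|$ are $F$-invariants---the former since $Z(G)$ is an $F$-invariant by \cref{prop:BasicInvariants}(2) and taking $\Omega_1$ is an isomorphism invariant, the latter as the $p$-torsion of $Z(G) \cap \Agemo(G)G' = Z(G) \cap \Frat(G)$, an $F$-invariant by \cref{cor:TransferLemmaInvariants}(1) with $k = 1$. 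Therefore $|A|$ is $F$-invariant, and since $A$ and $B$ are elementary abelian of the same order, $A \cong B$.

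For $FU \cong FV$, direct cancellation of $FA \cong FB$ from the tensor decompositions $FG \cong FA \otimes_F FU$ and $FH \cong FB \otimes_F FV$ is not available in general. Instead I would fix an $F$-algebra isomorphism $\varphi \colon FG \to FH$, put $\widetilde{A} := \varphi(A) \subseteq 1 + \Delta(FH)$, and record that $\widetilde{A}$ is a central elementary abelian $p$-subgroup of rank $a$ meeting $1 + \Delta^2(FH)$ trivially. The plan is then to construct an $F$-algebra automorphism $\psi$ of $FH$ with $\psi(\widetilde{A}) \leq B$; a rank count forces $\psi(\widetilde{A}) = B$, after which $\psi \circ \varphi$ maps the ideal $\Delta(A)FG$ onto $\Delta(B)FH$ and descends to an $F$-algebra isomorphism $FG/\Delta(A)FG \to FH/\Delta(B)FH$, which under the natural identifications is the desired $FU \cong FV$.

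Building such a $\psi$ is the main obstacle. I would proceed by induction on $a$, reducing to the case $\widetilde{A} = \gen{1 + x}$ with $x \in Z(FH)$, $x^p = 0$, $x \notin \Delta^2(FH)$. The automorphism $\psi$ should then be assembled iteratively along the augmentation filtration: start from a cyclic direct factor $\gen{b}$ of $B$ whose generator matches $1 + x$ modulo $1 + \Delta^2(FH)$, and adjust successively in higher Jennings layers by central substitutions of the form $b \mapsto b(1 + z)$ with $z$ in deeper powers of $\Delta(FH) \cap Z(FH)$. The centrality of $x$ and the fact that $B$ is a genuine direct factor of $H$, rather than merely a central elementary abelian subgroup, should keep the corrections consistent, and the process will terminate since $\Delta(FH)$ is nilpotent. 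This is where the technical heart of the argument of \cite{MargolisSakuraiStanojkovski23} is concentrated.
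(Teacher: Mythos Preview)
The paper does not contain a proof of this proposition: it is stated with a citation to \cite{MargolisSakuraiStanojkovski23} and nothing more, so there is no ``paper's own proof'' to compare your attempt against.

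That said, a few comments on your proposal on its own merits. Your argument for $A \cong B$ is correct and is a clean application of the invariants assembled in the paper: the identity $\operatorname{rank}(A) = \log_p|\Omega_1(Z(G))\Frat(G)/\Frat(G)|$ is right (using $\Frat(A\times U)=\Frat(U)$ and maximality of $A$ to get $\Omega_1(Z(U))\leq\Frat(U)$), and both $|\Omega_1(Z(G))|$ and $|\Omega_1(Z(G)\cap\Frat(G))|$ are $F$-invariants by \cref{prop:BasicInvariants} and \cref{cor:TransferLemmaInvariants}(1) with $k=1$.

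For $FU \cong FV$, what you have written is a plan, not a proof. The inductive construction of $\psi$ by successive central corrections $b\mapsto b(1+z)$ along the augmentation filtration is indeed the shape of the argument in \cite{MargolisSakuraiStanojkovski23}, but the verification that each correction step can be made while keeping $\psi$ a well-defined $F$-algebra automorphism (in particular, that the adjustments on the chosen generator are compatible with the remaining relations of $H$, and that the central element $z$ needed at each layer actually exists in the right power of the radical) is exactly the substantive work, and you have not carried it out. Saying ``this is where the technical heart is concentrated'' is an honest disclaimer, but it means your write-up is a proof outline rather than a proof.
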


Though a similar theorem was shown to hold for abelian direct factors over prime fields in \cite{GarciaLucas24}, it is not known if this generalizes to all fields.

In a somehow similar spirit to performing algebraic operations to get invariant sections, but considering the dimension of certain images, the following numerical invariants involving conjugacy classes can be obtained.
These are due to K\"ulshammer and Parmenter--Polcino Milies, respectively.
We refer to \cite[p.~13]{HertweckSoriano06} for short proofs.
We will write $\Cl(G)$ for the set of conjugacy classes of a group $G$ and $C^n = \set{ g^n \given g \in C }$ for the $n$-th power of a conjugacy class $C \in \Cl(G)$.

\begin{prop}\label{prop:InvariantsCC}
    For each non-negative integer~$k$, the sizes of the following two sets are $F$-invariants.
    \begin{equation*}
        \set[\big]{ C^{p^k} \given C \in \Cl(G) }, \  \set[\big]{ C^{p^k} \given C \in \Cl(G),\ |C^{p^k}| = |C| }.
    \end{equation*}
    In particular, the exponent of $G$ is an $F$-invariant.
\end{prop}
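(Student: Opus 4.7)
The plan is to realize each of the two cardinalities as the $F$-dimension of an $F$-subspace defined intrinsically from the algebra structure of $FG$.

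For the first cardinality, Jacobson's formula in characteristic $p$ gives $(x+y)^p \equiv x^p + y^p \pmod{[FG,FG]}$, so the $p^k$-th power induces a well-defined additive, Frobenius-semilinear map $\pi_k \colon FG/[FG,FG] \to FG/[FG,FG]$, $\bar x \mapsto \overline{x^{p^k}}$. In the basis $\{\bar g_C\}_{C \in \Cl(G)}$ of class representatives one has $\pi_k(\bar g_C) = \bar g_{C^{p^k}}$. Since the Frobenius of $F$ is injective and $(\sum \lambda_C)^{p^k} = \sum \lambda_C^{p^k}$ in characteristic $p$, the condition $\pi_k(\sum \lambda_C \bar g_C) = 0$ is equivalent to $\sum_{C : C^{p^k} = D} \lambda_C = 0$ for every class $D$ in the image of $C \mapsto C^{p^k}$. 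Hence $\dim_F \ker \pi_k = |\Cl(G)| - |\{C^{p^k} : C \in \Cl(G)\}|$, giving the first cardinality as the $F$-invariant $\dim_F(FG/[FG,FG]) - \dim_F \ker \pi_k$.

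For the second cardinality, I would work inside $Z(FG)$, whose commutativity makes $\phi_k \colon Z(FG) \to Z(FG)$, $z \mapsto z^{p^k}$, a ring endomorphism with $F$-subspace kernel. The crux is the identity
\[
    \hat C^{p^k} = [C_G(g^{p^k}):C_G(g)] \cdot \widehat{C^{p^k}} \qquad (g \in C),
\]
valid for $G$ a finite $p$-group. By a short induction this reduces to the case $k=1$, which I would prove as follows: write $\hat C^p = \sum_y N(y)\, y$ where $N(y) = \#\{(h_1, \ldots, h_p) \in C^p : h_1 \cdots h_p = y\}$, and let $C_G(y)$ act on these tuples by simultaneous conjugation. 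Since $|C_G(y)|$ is a $p$-power, Burnside mod $p$ gives $N(y) \equiv |\mathrm{Fix}| \pmod{p}$; a fixed tuple has all entries in $Z(C_G(y)) \cap C$, and these entries commute with $y$, so the $\mathbb{Z}/p$-rotation preserves products on Fix and a second Burnside reduces Fix to diagonal tuples $(h,\ldots,h)$ with $h \in C \cap Z(C_G(y))$ and $h^p = y$. The three cases ($y \notin C^p$; $y \in C^p$ with $|C| = |C^p|$; $y \in C^p$ with $|C| > |C^p|$) then match $N(y) \bmod p$ to the claimed coefficient, using that the chain $C_G(h) \subseteq C_G(h^p)$ of $p$-subgroups is an equality precisely when $|C| = |C^p|$.

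Since $[C_G(g^{p^k}):C_G(g)]$ is a $p$-power, it equals $1$ in $F$ when $|C^{p^k}| = |C|$ and $0$ otherwise, so $\hat C^{p^k}$ is either $\widehat{C^{p^k}}$ or $0$. Writing $z = \sum \lambda_C \hat C$ and using Frobenius injectivity again, $z^{p^k} = 0$ amounts to $\sum_{C \in \mathcal S,\, C^{p^k} = D} \lambda_C = 0$ for each $D$ in the image of $C \mapsto C^{p^k}$ restricted to $\mathcal S = \{C : |C^{p^k}| = |C|\}$. These constraints have pairwise disjoint supports, yielding
\[
    \dim_F \ker \phi_k = |\Cl(G)| - |\{C^{p^k} : C \in \Cl(G),\, |C^{p^k}| = |C|\}|,
\]
so the second cardinality equals the $F$-invariant $\dim_F Z(FG) - \dim_F \ker \phi_k$. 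The ``in particular'' statement follows because the exponent of $G$ is $p^k$ for the minimal $k$ with first invariant equal to~$1$. The main obstacle is the identity for $\hat C^p$, where the $p$-group hypothesis is used both to apply Burnside mod $p$ (for $|C_G(y)|$ a $p$-power) and to upgrade $h \in Z(C_G(y))$ with $h^p = y$ to the centralizer equality $C_G(h) = C_G(y)$ via the inclusion $C_G(h) \subseteq C_G(h^p) = C_G(y)$.
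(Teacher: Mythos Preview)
The paper does not give its own proof of this proposition; it attributes the two invariants to K\"ulshammer and to Parmenter--Polcino Milies respectively and cites \cite[p.~13]{HertweckSoriano06} for short proofs. Your argument is precisely the standard one underlying those references: the first cardinality is read off from the semilinear $p^k$-power map on $FG/[FG,FG]$ (this is K\"ulshammer's construction of the spaces $T_k(FG)$), and the second from the $p^k$-power endomorphism of the commutative ring $Z(FG)$. Your kernel computations are correct, including the point that the kernels are genuine $F$-subspaces despite the maps being only Frobenius-semilinear, and the reduction of the vanishing conditions to $F$-linear constraints via $(\sum\lambda_C)^{p^k}=\sum\lambda_C^{p^k}$ together with injectivity of Frobenius on $F$.

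One small remark on presentation: the identity $\hat C^{p^k} = [C_G(g^{p^k}):C_G(g)]\cdot\widehat{C^{p^k}}$ holds in $FG$, i.e.\ modulo~$p$, but not in $\mathbb{Z}G$ (already for a class of reflections in $D_8$ one has $\hat C^2 = 2 + 2a^2 \neq 2\cdot 1$). Your Burnside argument only yields the congruence $N(y)\equiv [C_G(g^{p}):C_G(g)]\pmod p$, which is exactly what is needed, so the proof is fine; just avoid phrasing it as an integral equality. Also, in the final paragraph the logical direction you actually use is: from $|C|=|C^p|$ deduce $C_G(h)=C_G(y)$ and hence $h\in Z(C_G(y))$, giving a unique diagonal contribution; and from $|C|>|C^p|$ deduce $C_G(h)\lneq C_G(y)$, hence $h\notin Z(C_G(y))$, giving none. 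Your sentence ``upgrade $h\in Z(C_G(y))$ \ldots\ to the centralizer equality'' reads slightly backwards, but the mathematics is correct.
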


Another fruitful source of $F$-invariants is provided by cohomology.
As cohomology rings are defined in their abstract form in terms of the group algebra alone, if one has an $FG$-module $M$ which is defined independently of the given basis $G$, then its cohomology ring is an $F$-invariant.

This includes the cohomology ring $H^*(G, F)$ of the trivial module $F$ as well as the Hochschild cohomology $HH^*(FG)$ which is the cohomology ring of the group algebra considered as a module over its enveloping algebra.
Of course, describing cohomology rings is in general not easy, but some data can also be given purely in group-theoretical terms.
For instance, the dimension of $H^1(G,F)$ is the minimal number of generators of $G$, which we already know to be an $F$-invariant as mentioned above.
Deeper group-theoretical invariants follow from Quillen's stratification of $H^*(G, F)$ \cite{Quillen71a, Quillen71b} and from an expression of $\dim HH^1(FG)$ (cf.~\cite[Theorem 2.11.2]{Benson98b} and \cite[Proposition 3.14.2]{Benson98a}):

\begin{prop}\label{prop:CohomInvariants}
    For each non-negative integer~$k$, the number of conjugacy classes of maximal elementary abelian subgroups of $G$ of rank~$k$ is an $F$-invariant.

    Moreover, the following value, which equals the dimension of the first Hochschild cohomology group of $FG$, is also an $F$-invariant \[ \sum_{g^G} \dim H^1(C_G(g), F) = \sum_{g^G} \log_p |C_G(g)/\Frat(C_G(g))|, \] where the sums run over all the conjugacy classes $g^G$ of $G$.
\end{prop}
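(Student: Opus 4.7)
The plan is to derive both parts from standard facts about the cohomology of group algebras, combined with the observation that the relevant cohomology rings are invariants of $FG$ as an $F$-algebra. For the ordinary cohomology $H^*(G,F) \cong \mathrm{Ext}^*_{FG}(F,F)$, this uses that the trivial module $F \cong FG/\Delta(FG)$ is intrinsic to $FG$, since $\Delta(FG)$ is its Jacobson radical. For the Hochschild cohomology $HH^*(FG)$, invariance is immediate from its definition as $\mathrm{Ext}$ over the enveloping algebra $FG \otimes_F (FG)^{\mathrm{op}}$.

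For the first claim I would apply Quillen's stratification \cite{Quillen71a, Quillen71b}: the irreducible components of $\mathrm{Spec}\,H^*(G,F)$ are in bijection with conjugacy classes of maximal elementary abelian $p$-subgroups of $G$, and the Krull dimension of the component attached to such a subgroup of rank $k$ equals $k$. Since the graded $F$-algebra $H^*(G,F)$ is an $F$-invariant, so is the number of its irreducible components of each fixed dimension, which is exactly the quantity claimed.

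For the second claim I would invoke the classical decomposition \cite[Theorem 2.11.2]{Benson98b}
\[
    HH^n(FG) \cong \bigoplus_{g^G} H^n(C_G(g), F),
\]
summed over conjugacy classes of $G$. Taking $F$-dimensions in degree~$1$ yields the first equality. For the second, $F$ carries the trivial $C_G(g)$-action, so $H^1(C_G(g), F) = \mathrm{Hom}(C_G(g), F)$. Each additive character $C_G(g) \to (F,+)$ factors through $C_G(g)/\Frat(C_G(g))$, because $\Frat(C_G(g)) = C_G(g)' C_G(g)^p$ and $(F,+)$ has exponent $p$; the quotient is elementary abelian of rank $\log_p |C_G(g)/\Frat(C_G(g))|$, and this equals the $F$-dimension of the space of homomorphisms into $F$.

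The main obstacle is bookkeeping rather than substance: Quillen's theorem is typically stated over $\mathbb{F}_p$, so one should confirm that the base-change isomorphism $H^*(G,F) \cong H^*(G,\mathbb{F}_p) \otimes_{\mathbb{F}_p} F$ preserves both the bijection with conjugacy classes of maximal elementary abelian subgroups and the dimensions of the associated components. This follows from the flatness of $\mathbb{F}_p \to F$, after which the proposition reduces to a compilation of the cited results.
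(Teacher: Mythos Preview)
Your proposal is correct and follows precisely the route the paper indicates: the paper does not give a proof of this proposition at all, but merely records it as a consequence of Quillen's stratification \cite{Quillen71a, Quillen71b} for the first part and of the decomposition $HH^*(FG) \cong \bigoplus_{g^G} H^*(C_G(g),F)$ from \cite[Theorem~2.11.2]{Benson98b} together with \cite[Proposition~3.14.2]{Benson98a} for the second. Your write-up simply unpacks these references, so there is nothing to compare.
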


The maximal elementary abelian subgroups can be used to see for instance that dihedral groups and generalized quaternion groups have non-isomorphic group algebras over any field.
We will use the dimension of the first Hochschild cohomology group for the proof of \cref{thm:BaginskiKurdics} below.

There are several fundamental properties of $G$ for which it is unknown whether they are $F$-invariants.
This includes the nilpotency class of~$G$.
We summarize what is known here, referring to \cite[Theorem 2]{BaginskiKonovalov07} for proofs of the first three points and the formulation of \cite[Theorem 3.1]{BaginskiKurdics19} together with \cref{prop:BasicInvariants} makes clear that \cite[Corollary 3.1]{BaginskiKurdics19} holds over any field.

\begin{prop}\label{prop:Class}
    The nilpotency class of $G$ is an $F$-invariant, if one of the following holds:
    \begin{enumerate}
        \item The exponent of $G$ is $p$.
        \item $G'$ is cyclic.
        \item $G$ is of nilpotency class two.
        \item $G$ is of maximal class.
    \end{enumerate}
\end{prop}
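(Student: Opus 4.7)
The plan is to treat the four cases separately, always with the $F$-invariance of the Jennings--Lazard quotients $D_n(G)/D_{n+1}(G)$ from \cref{prop:BasicInvariants}(3) as the central lever. In each case the aim is to express the nilpotency class $c$ of $G$ in terms of $F$-invariants listed earlier so that the corresponding integer for $H$ is forced to agree.

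For (i), exponent $p$ forces $\Agemo_j(\gamma_i(G)) = 1$ whenever $j \geq 1$, so Lazard's description $D_n(G) = \prod_{ip^j \geq n}\Agemo_j(\gamma_i(G))$ collapses to $D_n(G) = \gamma_n(G)$. Since the exponent of $G$ is itself an $F$-invariant by \cref{prop:InvariantsCC}, the same collapse holds for $H$, so $c$ is read off as the largest $n$ with $D_n(G)/D_{n+1}(G) \neq 1$. Case (iii) is similar in spirit: class at most $2$ is equivalent to $G' \leq Z(G)$, i.e.\ to the equality $|Z(G) \cap G'| = |G'|$. Both sides are $F$-invariant, $|G'|$ via $|G|$ and $|G/G'|$ from \cref{prop:BasicInvariants}(1), and $|Z(G) \cap G'|$ from \cref{cor:TransferLemmaInvariants}. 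Together with the $F$-invariance of $FG$ being commutative (distinguishing class $1$ from class $2$), this suffices.

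Case (ii) is the one where the real work sits, and I would simply follow \cite[Theorem 2]{BaginskiKonovalov07}. When $G'$ is cyclic every $\gamma_i(G)$ is cyclic, so each factor $\Agemo_j(\gamma_i(G))$ in Lazard's formula is cyclic of order determined by $|\gamma_i(G)|$. The strategy is to invert Lazard's formula by descending induction on $i$, recovering the numbers $|\gamma_i(G)|$ from the $F$-invariants $|D_n(G)/D_{n+1}(G)|$; one then reads off $c$ as the largest $i$ with $|\gamma_i(G)| > 1$. For this one first needs $H'$ to be cyclic as well, which one extracts from the shape of the Jennings quotients for small $n$. The main obstacle is the combinatorial bookkeeping in inverting Lazard's formula, which is exactly why I would invoke the explicit argument in \cite{BaginskiKonovalov07} rather than redo it.

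Finally, for (iv), the plan is to invoke \cite[Theorem 3.1]{BaginskiKurdics19}. Although that statement is formulated over $\mathbb{F}_p$, an inspection of its proof shows that the only ingredients used are the three $F$-invariants in \cref{prop:BasicInvariants}, which are invariants over an arbitrary field $F$ of characteristic $p$. Hence the argument transfers verbatim, as is already flagged in the remark preceding the proposition.
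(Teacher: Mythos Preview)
Your proposal is correct and aligns with the paper's treatment: the paper does not give a self-contained proof but simply defers parts (i)--(iii) to \cite[Theorem~2]{BaginskiKonovalov07} and part (iv) to \cite[Theorem~3.1, Corollary~3.1]{BaginskiKurdics19} combined with \cref{prop:BasicInvariants}, exactly the references you invoke. The only difference is cosmetic: you spell out direct arguments for (i) and (iii) using Lazard's formula and the invariant $Z(G)\cap G'$, whereas the paper folds all three of (i)--(iii) into the single citation to Bagi\'nski--Konovalov; your elaboration is sound and arguably more informative, but it is not a different route.
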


We would like to mention that all of the invariants described above can be explored in the \mbox{\textsf{ModIsom}} package~\cite{ModIsom} of \textsf{GAP}~\cite{GAP} and in particular functions are available to apply them one after the other to a given set of groups.

To close this section, we would like to mention an idea presented in~\cite{Sakurai20} which allows to study MIP over all fields when combined with a reduction~\cite{GarciaLucasDelRio24}.
The unit groups of algebras, not necessarily of group algebras nor their quotients, play a key role there and it would be interesting to seek similar or alternative constructions of algebras~\cite[\S\S~3.2--3.3]{Sakurai20} that work over all finite fields.

\subsection{Differences between prime fields and all fields}

We notice that the results in \cref{th:KnownClasses} do not cover several rather large classes of groups for which a positive solution to MIP over prime fields is known, such as groups of nilpotency class two with elementary abelian derived subgroup \cite{Sandling89} or groups with trivial fourth dimension subgroup when $p$ is odd \cite{Hertweck07}.
Neither does it include the groups of order $p^4$, one of the earliest results about the MIP~\cite{Passman65}.
There are essentially three reasons why these classes are not covered and we shortly revise the main differences in the techniques available to study MIP over prime fields or over all fields.

\case{Difference 1}
One classical idea to study isomorphism types of modular group algebras consists in counting elements which lie in the kernel of certain maps defined on sections of the group algebra, typically power maps on quotients of powers of the augmentation ideal.
The resulting numbers are sometimes called kernel sizes.
This idea was already suggested by Brauer in \cite{Brauer63} and carried out systematically for the first time by Passman~\cite{Passman65}, cf.~also \cite[\S~2.7]{HertweckSoriano06}.
While the idea is feasible over any field, it might happen that the kernel sizes are different over prime fields but become equal over certain extension fields.
For example, the last argument in~\cite{Passman65} involves the fact that if $\alpha$ is a quadratic non-residue modulo $p$, then the polynomial $\alpha X^2 - 1$ has no root in $\mathbb{F}_p$, which is of course incorrect in $\mathbb{F}_{p^2}$.
Also, in \cite[Theorem 1.2]{GarciaLucasMargolis24} the restrictions on the field is due to such a fact.

On the other hand, the technique has also been used to obtain results over all fields, e.g.\ it was used by Drensky \cite[Proposition 2.3]{Drensky89} or in \cite[Lemma 4.8]{MargolisSakurai25}.
In both cases it is less about a size though, but about the existence of non-trivial elements in the kernel.
Overall, kernel sizes are hence still useful to study MIP over all fields, but their application is sometimes more difficult and the achievable results are in general weaker than over prime fields.
We mention that the \mbox{\textsf{ModIsom}} package \cite{ModIsom} admits the calculation of the kernel size for power maps from version 3.0.0 by means of the function \texttt{KernelSizePowerMap}.

\case{Difference 2}
One of the early results on MIP over prime fields was the proof of Passi and Sehgal that the isomorphism type of $D_n(G)/D_{n+2}(G)$ is an $\mathbb{F}_p$-invariant for every positive integer $n$ and so in particular MIP over prime fields has a positive answer for groups with trivial third dimension subgroup \cite{PassiSehgal72}.
To see why their arguments require the field of $p$ elements define iteratively the series of Lie ideals as $\Delta^{[1]}(FG) = \Delta(FG)$ and $\Delta^{[n+1]}(FG) = [FG,\Delta^{[n]}(FG)]FG$ for each positive integer $n$.
Then the $n$-th Zassenhaus ideal of $FG$ is defined as $Z_n(FG) = \sum_{ip^j \geq n} \Delta^{[i]}(FG)^{p^j} + \Delta^{n+1}(FG)$.
The fact realized in \cite{PassiSehgal72} is that for $F=\mathbb{F}_p$ one has an equality $Z_n(FG) = (D_n(G) - 1) +\Delta^{n+1}(FG)$ for any $n$, so the $n$-th dimension subgroup can be identified in $\mathbb{F}_pG$ modulo the $(n+1)$-th power of the augmentation ideal.
In turn, the importance of $\mathbb{F}_p$ is most visible in the more detailed proof given by Sehgal in his book \cite{Sehgal78}, namely in the proof of \cite[Claim (6.15), p.~114]{Sehgal78}:
for $g \in G$ with $g-1 \in \Delta^n(FG)$ and $\lambda \in \mathbb{F}_p$ one has
\begin{equation}\label{eq:CongruenceModHigherDelta}
    \lambda(g-1) \equiv g^\ell-1 \mod \Delta^{n+1}(FG),
\end{equation}
where $\ell \in \mathbb{Z}$ maps to $\lambda$ modulo $p$. 
This has no clear analogue for general fields.

The result of Passi and Sehgal has been later generalized in \cite{Furukawa81, RitterSehgal83} to obtain the isomorphism type of $D_n(G)/D_{2n+1}(G)$ as an $\mathbb{F}_p$-invariant.
Zassenhaus ideals, or rather the observation on them described above, also play a central role in the techniques employed in \cite{HertweckSoriano06, Hertweck07}, so in particular in the result that the isomorphism type of $G/D_4(G)$ is an $\mathbb{F}_p$-invariant when $p$ is odd.
Not the Zassenhaus ideals themselves, but still the congruence \eqref{eq:CongruenceModHigherDelta} also enters into the proof of the main theorem of \cite{GarciaLucas24} which states that in the context of MIP over prime fields direct abelian factors can be canceled.

We would like to remark that all the invariants known today over all fields do not allow to cover the class of groups with trivial third dimension subgroup.
In the given context this is a wide class, but it appears still to be accessible.
This leads us to ask:

\begin{que}
    Does MIP over all fields hold for groups with trivial third dimension subgroup?
\end{que}

\case{Difference 3}
A third method to study MIP over prime fields is the use of the small group ring $\mathbb{F}_pG/\Delta(\mathbb{F}_pG)\Delta(\mathbb{F}_pG')$ and its unit group, which we denote by $S(\mathbb{F}_pG)$.
This was first done by Sandling \cite{Sandling89} who used it to find the isomorphism type of $G/\Agemo(G')\gamma_3(G)$ as an $\mathbb{F}_p$-invariant.
So especially, MIP over prime fields has a positive answer for $G$, if $\Agemo(G')\gamma_3(G)=1$.
The small group ring also played a central role in the proof of MIP over prime fields for groups of order $p^5$ \cite{SalimSandling96} and was later used in \cite{MargolisStanojkovski22, MargolisMoede22, BrennerGarciaLucas24} for some more cases.

These applications are based on an explicit description of an independent generating set of the unit group of $\mathbb{F}_pA$ by Sandling, where $A$ denotes a finite abelian group \cite{Sandling84}.
This description can be used to show that under certain conditions $G$ embeds in $S(\mathbb{F}_pG)$ as a normal subgroup and one can compute explicitly the action of a complement on it.
It might be possible that the method could also bring about some results for MIP over all fields, in particular using the description of an independent generating set of the unit group of a commutative modular group algebra over any field in \cite[Theorem 2.3]{BovdiSzakacs95}.
But it seems that even when $\Agemo(G')\gamma_3(G)=1$, the group $G$ will not embed as a normal subgroup in $S(FG)$, so some new ideas are needed to make the method work over all fields.
Apart from the original paper \cite{Sandling89}, we refer to \cite[\S~2.3]{HertweckSoriano06} and \cite[\S~2.5]{MargolisStanojkovski22} for more details on small group rings in the context of the MIP.

We next give an example, obtained from modular group algebras, that illustrates the differences of considering algebras over prime fields or more general fields.

\begin{exm}
    The smallest non-abelian and non-isomorphic groups of the same order are the dihedral group of order 8, denoted $D_8$, and the quaternion group of order $8$, denoted $Q_8$.
    The group algebras $FD_8$ and $FQ_8$ have been compared many times using different techniques.
    This is all the more interesting because at least the very basic invariants from \cref{prop:BasicInvariants} are the same for both groups.
    Techniques used (usually to prove something more general) include direct calculations of the relations of possible groups bases \cite{Holvoet66}, kernel size \cite[p.~408]{Passman65}, cohomology \cite{Carlson77}, counting elements satisfying certain equations using automorphisms of the group algebra \cite{Baginski92} or ring-theoretical considerations \cite{CreedonHughesSzabo21}.
    Some of these ideas work over any field, while others only work over some fields.
    Here we will show that these algebras are also interesting from the point of view taken in this note.

    Set $\Lambda = \Delta(FD_8)/\Delta^3(FD_8)$ and $\Gamma = \Delta(FQ_8)/\Delta^3(FQ_8)$.
    Then $\Lambda^3=0$ and $\Gamma^3=0$, i.e.\ $\Lambda$ and $\Gamma$ are nilpotent algebras of degree $3$.
    Moreover, $\Lambda$ is generated by two elements $a$ and $b$ which satisfy $ab+ba=a^2$ and $b^2=0$, while $\Gamma$ is generated by $x$ and $y$ which satisfy $xy + yx = x^2$ and $x^2 = y^2$.
    We first show that $\Lambda$ and $\Gamma$ are non-isomorphic when $F = \mathbb{F}_2$.
    Indeed, the only elements with non-zero square in $\Lambda$ are the elements in $a + \Lambda^2$, which can be seen as \[ (a+b)^2 = a^2 + ab + ba + b^2 = a^2 + a^2 = 0, \] while in $\Gamma$ both generators $x$ and $y$, and hence the elements in $x + \Gamma^2$ and $y + \Gamma^2$, have non-zero square.
    So in $\Lambda$ there are 4 such elements, while in $\Gamma$ there are 8.

    Next we show that the algebras are isomorphic when $F = \mathbb{F}_4$.
    For this let $\omega$ be a generator of the unit group of $F$ and note that $\omega^2 + \omega + 1 = 0$.
    In $\Lambda$ consider the elements $x' = a$ and $y' = \omega a + b$.
    We will show that $x'$ and $y'$ satisfy the same relations as $x$ and $y$ which implies in particular that $\Lambda$ and $\Gamma$ are isomorphic in this case.
    Now, \[ (y')^2 = (\omega a + b)^2 = \omega^2 a^2 + \omega(ab + ba) + b^2 = (\omega^2 + \omega)a^2 = a^2 = (x')^2 \] and \[ x'y' + y'x' = a(\omega a + b) + (\omega a + b)a = ab + ba = a^2 = (x')^2.
    \]
    So indeed, the relations of $x$ and $y$ hold for $x'$ and $y'$.
\end{exm}

It is interesting to note that in \cite{Drensky89} Drensky combined most of the techniques described above to prove MIP over all fields in case the center of $G$ has index $p^2$.
Namely, he used the results of Jennings, direct algebraic calculations in the spirit of the kernel size technique involving the ideal $\Delta(G')FG$ and the center of $FG$ and also dimensions of cohomology groups.

\section{From prime fields to all fields}

\subsection{Finite metacyclic $p$-groups}
MIP over prime fields was studied for the class of finite metacyclic $p$-groups first by Bagi\'nski~\cite{Baginski88}, and a complete proof over prime fields was given later by Sandling~\cite{Sandling96}.
In~\cite{Margolis22} it was claimed that the arguments from \cite{Sandling96} show that MIP over all fields holds for metacyclic groups.
This, i.e.\ that the proof generalizes directly, is however not true and was based on a wrong reading of \cite[Theorem 3.1]{Shalev90} which does not in fact hold for $p=2$.
We fill in the missing part of the claim, illustrating in particular the use of ideals.
First we make three easy group-theoretical observations.
We start with a variant of the Correspondence Theorem for finite $p$-groups.
Recall that $\Agemo(G) = \gen{ g^p \given g \in G }$ and $\Frat(G) = \Agemo(G)G'$.
For a normal subgroup $K$ of $G$, the set of subgroups of $G$ containing $K$ is denoted by $\Sub(G : K)$.
\begin{lem}\label{lem:Corresponding}
    Let $K$ and $L$ be normal subgroups of $G$.
    Assume $\Frat(K)$ contains $L$.
    Then the canonical bijection \[ \Sub(G:K) \to \Sub(G/L:K/L) \] is rank-preserving, that is, preserves the minimal number of generators.
\end{lem}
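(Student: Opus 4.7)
The plan is to reduce the claim $d(H) = d(H/L)$ for every $H \in \Sub(G:K)$ to the single assertion that $L$ is contained in $\Frat(H)$, after which the standard compatibility of the Frattini subgroup with quotients closes the argument. Here $d(\cdot)$ denotes the minimal number of generators, which for a finite $p$-group equals $\log_p$ of the order of the quotient by its Frattini subgroup.

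First I would verify the monotonicity $\Frat(K) \leq \Frat(H)$ for every $H \in \Sub(G:K)$. Since $K \leq H$, one has $\Agemo(K) \leq \Agemo(H)$ and $K' \leq H'$, so $\Frat(K) = \Agemo(K) K' \leq \Agemo(H) H' = \Frat(H)$. Combined with the hypothesis $L \leq \Frat(K)$, this yields $L \leq \Frat(H)$. Next I would verify the standard fact that for a finite $p$-group $H$ and a normal subgroup $L$ of $H$ contained in $\Frat(H)$ the equality $\Frat(H/L) = \Frat(H)/L$ holds. This is a one-line computation: $\Agemo(H/L)(H/L)' = (\Agemo(H) L / L)(H' L / L) = \Frat(H) L / L = \Frat(H)/L$, where the last equality uses $L \leq \Frat(H)$. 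By the third isomorphism theorem one then obtains $(H/L)/\Frat(H/L) \cong H/\Frat(H)$, and taking $\log_p$ of the orders of these two elementary abelian $p$-groups gives $d(H/L) = d(H)$, as wanted.

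The main obstacle is essentially nonexistent; the lemma amounts to the two standard facts above. The only point requiring minor care is that the map $H \mapsto H/L$ actually defines a bijection between $\Sub(G:K)$ and $\Sub(G/L:K/L)$, which is immediate from $L \leq \Frat(K) \leq K$ together with the Correspondence Theorem applied to $L \trianglelefteq G$.
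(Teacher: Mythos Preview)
Your proof is correct and follows essentially the same route as the paper: show $L \leq \Frat(K) \leq \Frat(H)$ for every $H \supseteq K$, deduce $\Frat(H/L) = \Frat(H)/L$, and compare Frattini quotients. The only cosmetic difference is that the paper invokes normality of $K$ to obtain $\Frat(K) \leq \Frat(H)$, whereas you argue directly via $\Agemo(K) \leq \Agemo(H)$ and $K' \leq H'$; your version is slightly more elementary and in fact does not use the normality hypothesis on $K$ at that step.
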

\begin{proof}
    Let $H$ be a subgroup of $G$ and assume $H$ contains $K$.
    Since $K$ is normal, $\Frat(H)$ contains $\Frat(K)$ and, by the assumption, $\Frat(H)$ contains $L$.
    Then $\Frat(H/L) = \Frat(H)/L$ and hence $H/\Frat(H) \cong (H/L)/\Frat(H/L)$, which proves the assertion as the dimensions of Frattini quotients equal ranks.
\end{proof}

\begin{lem}\label{lem:MetacyclicModFratG'}
    $G$ is metacyclic if and only if $G/\Frat(G')$ is metacyclic.
\end{lem}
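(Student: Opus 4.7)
The forward direction is immediate, since the class of metacyclic groups is closed under quotients: if $N \trianglelefteq G$ is cyclic with $G/N$ cyclic, the image of $N$ in $G/\Frat(G')$ witnesses metacyclicity of the quotient. So the content lies in the converse.

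Assume $\bar G := G/\Frat(G')$ is metacyclic. My plan has two steps: first force $G'$ itself to be cyclic, then lift a metacyclic decomposition of $\bar G$ and upgrade the lifted subgroup, which a priori is only cyclic modulo $\Frat(G')$, to an honestly cyclic subgroup. For the first step, observe that every metacyclic group has cyclic derived subgroup: if $\bar N \trianglelefteq \bar G$ is cyclic with $\bar G/\bar N$ cyclic and hence abelian, then $\bar G' \subseteq \bar N$, so $\bar G' = G'/\Frat(G')$ is cyclic. The Burnside basis theorem applied to the finite $p$-group $G'$ then forces $G'$ itself to be cyclic; fix a generator $c$, so that $\Frat(G') = \Agemo(G') = \gen{c^p}$.

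For the second step, lift the metacyclic witness to $G$: pick $N \trianglelefteq G$ with $\Frat(G') \subseteq N$ such that $N/\Frat(G')$ is cyclic and $G/N$ is cyclic. Since $G/N$ is abelian, $G' \subseteq N$, hence $c \in N$. Writing $N = \gen{n}\Frat(G') = \gen{n, c^p}$ for some $n \in N$ whose coset generates $N/\Frat(G')$, the membership $c \in N$ produces a relation $c = n^i c^{pj}$ for integers $i, j$, whence $c^{1-pj} = n^i$. Because $1 - pj$ is coprime to $p$ and $|c|$ is a power of $p$, the exponent $1 - pj$ is invertible modulo $|c|$, so $c \in \gen{n}$ and in particular $c^p \in \gen{n}$. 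Hence $N = \gen{n}$ is cyclic, and together with $G/N$ cyclic this exhibits $G$ as metacyclic.

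The step I expect to be the main obstacle is precisely this last one: from $N/\Frat(G')$ being cyclic one cannot in general conclude that $N$ is cyclic, since $\Frat(G')$ need not be central in $N$. The argument sidesteps this by combining two pieces of information that come essentially for free, namely the containment $G' \subseteq N$ (forced by $G/N$ being abelian) and the specifically $p$-group coprimality $\gcd(1-pj, p) = 1$, which together allow the potentially troublesome extra generator $c^p$ to be absorbed into $\gen{n}$.
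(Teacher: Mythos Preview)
Your proof is correct and follows the same skeleton as the paper's: lift a cyclic normal subgroup $\bar N$ of $\bar G$ to $N \trianglelefteq G$, observe $G' \subseteq N$ because $G/N$ is abelian, and then argue that $N$ itself is cyclic. The difference is only in how that last step is executed. The paper packages it abstractly via \cref{lem:Corresponding}: since $G' \trianglelefteq N$, one has $\Frat(G') \subseteq \Frat(N)$, so $N$ and $N/\Frat(G')$ have the same minimal number of generators, hence $N$ is cyclic. Your route is more concrete: you first deduce that $G'$ is cyclic (via Burnside's basis theorem), name a generator $c$, and then absorb $c^p$ into $\gen{n}$ by an explicit coprimality computation. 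That computation is really a hands-on instance of the inclusion $\Frat(G') \subseteq \Frat(N)$; once you see this, the preliminary step of proving $G'$ cyclic becomes unnecessary, and your worry about $\Frat(G')$ not being central in $N$ disappears, since centrality plays no role in the Frattini-inclusion argument.
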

\begin{proof}
    First, if $G$ is metacyclic, then so is a quotient $G/\Frat(G')$.
    Conversely suppose that $G/\Frat(G')$ is metacyclic.
    Then $G$ has a normal subgroup $N$ containing $\Frat(G')$ such that $N/\Frat(G')$ and $(G/\Frat(G'))/(N/\Frat(G')) \cong G/N$ are both cyclic.
    Observe that $N/\Frat(G')$ contains $G'/\Frat(G')$ and thus $N$ contains $G'$.
    By \cref{lem:Corresponding}, $N$ is also cyclic.
    Hence $G$ is metacyclic.
\end{proof}

\begin{lem}\label{lem:G'HasOrderP}
    Assume $G'$ has order $p$ and $G$ is two-generated.
    Then $G/Z(G)$ is elementary abelian of rank~$2$.
\end{lem}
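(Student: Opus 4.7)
The plan is to first show that $G'$ is central, then embed $G/Z(G)$ into $G' \times G'$, and finally verify that the embedding is in fact an isomorphism by a non-cyclicity argument.

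First I would observe that $G'$ is central. Since $G'$ is normal in $G$ of order $p$, the conjugation action of $G$ on $G'$ gives a homomorphism into $\mathrm{Aut}(G') \cong C_{p-1}$. But $G$ is a $p$-group, so this map is trivial, whence $G' \leq Z(G)$. In particular $G$ is of nilpotency class at most two, and the commutator map is bilinear, i.e.\ $[ab, c] = [a, c][b, c]$ and $[a, bc] = [a, b][a, c]$ for all $a, b, c \in G$.

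Write $G = \gen{x, y}$ and consider the map
\[
    \phi \colon G \to G' \times G', \quad g \mapsto ([x, g], [y, g]).
\]
The bilinearity above makes $\phi$ a group homomorphism. Its kernel consists of the elements commuting with both $x$ and $y$, that is, $\ker \phi = C_G(x) \cap C_G(y) = Z(G)$, the last equality holding because $x$ and $y$ generate $G$. Consequently $G/Z(G)$ embeds into the elementary abelian group $G' \times G'$ of order $p^2$.

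Finally, since $|G'| = p \neq 1$ the group $G$ is non-abelian, so $G/Z(G)$ cannot be cyclic and hence has order at least $p^2$. Combined with the embedding above, we conclude $|G/Z(G)| = p^2$ and $G/Z(G)$ is elementary abelian of rank~$2$. I do not foresee any real obstacle here; the only subtle point is recognizing that the centrality of $G'$ is what makes the commutator-based homomorphism $\phi$ well-defined.
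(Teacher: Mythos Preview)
Your proof is correct. Both you and the paper begin by observing that $G'$ is central and end with the standard fact that $G/Z(G)$ cannot be cyclic, but the middle steps differ. The paper argues directly that $G/Z(G)$ is elementary abelian from $[x,y^p]=[x,y]^p=1$ (so every $p$-th power lies in $Z(G)$), and then bounds its rank by noting that any elementary abelian quotient of a two-generated group is a quotient of $G/\Frat(G)$ and hence has rank at most~$2$. You instead build the homomorphism $g\mapsto([x,g],[y,g])$ into $G'\times G'$, which simultaneously forces $G/Z(G)$ to be elementary abelian and of order at most $p^2$. Your route packages the two bounds into a single embedding and avoids invoking the Frattini quotient; the paper's route is more hands-on with commutator identities and makes the role of the two-generation hypothesis visible through $\Frat(G)$. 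Both are short and entirely elementary.
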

\begin{proof}
    As $G'$ is normal and of order $p$, it is also central and so the standard commutator formulas imply that $[x,y^p] = [x,y]^p = 1$ for any $x,y \in G$.
    Hence $G/Z(G)$ is elementary abelian.
    As $G$ is two-generated, the largest elementary abelian quotient of $G$, which coincides with $G/\Frat(G)$, has rank~$2$.
    As $G/Z(G)$ can not be cyclic, as $G$ is not abelian, we conclude that indeed $G/Z(G)$ has order $p^2$.
\end{proof}

We next record two $F$-invariants.
The first is also recorded in~\cite[Corollary 1]{Baginski88} and though it is stated there over prime fields, the proof works over any field.
We include a proof however, to illustrate how to use \cref{prop:TransferLemmaApplications}.

\begin{prop}\label{prop:CyclicG'Invariant}
    Assume $G'$ is cyclic.
    Then its isomorphism type is an $F$-invariant.
\end{prop}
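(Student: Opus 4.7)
The plan is to transfer the Jennings--Lazard filtration from $G'$ to the derived subgroup of $H$ using the Transfer Lemma machinery. Let $\varphi\colon FG \to FH$ be an $F$-algebra isomorphism. The identity $\Delta(G')FG = [FG,FG]FG$ shows that $\varphi$ sends $\Delta(G')FG$ onto $\Delta(H')FH$, while the augmentation ideal $\Delta(G)FG$ is the unique maximal ideal and so maps to $\Delta(H)FH$. With the canonical choices $N_G = G$, $L_G = G'$ (and correspondingly $N_H = H$, $L_H = H'$), the hypotheses of \cref{th:TransferLemma} are met and \cref{prop:TransferLemmaApplications}(1) gives
\[
    D_n(G')/D_{n+1}(G') \cong D_n(H')/D_{n+1}(H')
\]
for every positive integer $n$.

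Next I would read off the left-hand side under the hypothesis $G' \cong C_{p^m}$. Since $G'$ is abelian, $\gamma_i(G') = 1$ for $i \geq 2$, and the Jennings--Lazard formula collapses to $D_n(G') = \Agemo_{\lceil \log_p n \rceil}(G')$. Consequently $D_n(G')/D_{n+1}(G')$ is trivial unless $n = p^j$ with $0 \leq j \leq m-1$, in which case it is $\Agemo_j(G')/\Agemo_{j+1}(G') \cong C_p$. Applying the displayed isomorphism at $n = 1$ yields $H'/\Frat(H') \cong C_p$, forcing the finite $p$-group $H'$ to be cyclic; multiplying the sizes of the quotients over all $n$ gives $|H'| = p^m = |G'|$. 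Two cyclic groups of the same order are isomorphic, so $H' \cong G'$.

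I do not expect any real obstacle. The only substantive point is to notice that the dimension subgroup quotients of $G'$ itself (and not merely those of $G$, which are already listed in \cref{prop:BasicInvariants}) are $F$-invariant; this is precisely what \cref{prop:TransferLemmaApplications}(1) supplies when applied to the canonical pair $(G, G')$. Once this is secured, the cyclicity hypothesis makes the Jennings--Lazard side of the argument essentially immediate.
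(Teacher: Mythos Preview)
Your argument is correct and follows essentially the same route as the paper: both apply \cref{prop:TransferLemmaApplications}(1) with the canonical choice $L_G = G'$ to obtain $D_1(G')/D_2(G') \cong D_1(H')/D_2(H')$, and hence the cyclicity of $H'$. The only difference is in recovering $|G'| = |H'|$: you telescope the full Jennings filtration of $G'$, whereas the paper simply invokes the basic invariant $G/G' \cong H/H'$ from \cref{prop:BasicInvariants} together with $|G| = |H|$, which is a little quicker.
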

\begin{proof}
    By the fact that $\Delta(G')FG$ equals the ideal generated by the Lie commutators, $G'$ has the properties needed to apply \cref{prop:TransferLemmaApplications}.
    Hence the isomorphism type of $D_1(G')/D_2(G')$ is an $F$-invariant and this group is cyclic if and only if $G'$ is cyclic, as it is just an elementary abelian group of rank equal to the size of a minimal generating set of $G'$.
    So if $FG \cong FH$, then $H'$ is also cyclic.
    Moreover, as $G/G' \cong H/H'$ by \cref{prop:BasicInvariants}, the derived subgroups have the same order.
    So $G' \cong H'$.
\end{proof}

\begin{prop}\label{prop:GModAgemoG'}
    Assume $G'$ is cyclic and $G$ is two-generated.
    Then the isomorphism type of $G/\Agemo(G')$ is an $F$-invariant.
\end{prop}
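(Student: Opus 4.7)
The plan is to exhibit an $F$-algebra isomorphism between $F[G/\Agemo(G')]$ and $F[H/\Agemo(H')]$, and then identify the two quotient groups via Drensky's theorem \cref{th:KnownClasses}(\ref{it:Drensky}). By \cref{prop:CyclicG'Invariant}, $H'$ is cyclic with $|H'|=|G'|$, and $H$ is two-generated because the minimal number of generators is an $F$-invariant. Writing $\bar G := G/\Agemo(G')$ and $\bar H := H/\Agemo(H')$, both are thus two-generated $p$-groups whose derived subgroups have order $p$. By \cref{lem:G'HasOrderP}, both $\bar G$ and $\bar H$ have centers of index $p^2$, so \cref{th:KnownClasses}(\ref{it:Drensky}) will deliver $\bar G \cong \bar H$ as soon as we produce an isomorphism $F\bar G \cong F\bar H$.

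The main step, and the one I expect to carry the whole argument, is the ideal identity
\[
    (\Delta(G')FG)^p = \Delta(\Agemo(G'))FG,
\]
which crucially uses that $G'$ is cyclic. Writing $G' = \langle z \rangle$, one has $\Delta(G')FG = (z-1)FG = FG(z-1)$. For any $g \in G$ the relation $g(z-1) = (z^g - 1)g$ with $z^g \in \langle z \rangle$ lets one rewrite $z^g - 1$ as a left multiple of $z-1$; iterating this ``shuffle'' through a product of $p$ elements of $\Delta(G')FG$ collapses the leading factor to $(z-1)^p = z^p - 1$. Since $\Agemo(G') = \langle z^p \rangle$, this element generates $\Delta(\Agemo(G'))FG$ as a two-sided ideal, and the reverse inclusion is immediate from $z^p - 1 = (z-1)^p \in (\Delta(G')FG)^p$.

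With the identity in hand, the proof concludes quickly: any $F$-algebra isomorphism $\varphi\colon FG \to FH$ sends $\Delta(G')FG$ to $\Delta(H')FH$, as these are precisely the Lie commutator ideals $[FG,FG]FG$ and $[FH,FH]FH$. Taking $p$-th powers of both sides yields $\varphi(\Delta(\Agemo(G'))FG) = \Delta(\Agemo(H'))FH$, and passing to the quotients gives $F\bar G \cong F\bar H$. Invoking \cref{th:KnownClasses}(\ref{it:Drensky}) then completes the proof.
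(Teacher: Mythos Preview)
Your approach is essentially identical to the paper's: establish $(\Delta(G')FG)^p = \Delta(\Agemo(G'))FG$ from the cyclicity of $G'$, deduce $F[G/\Agemo(G')] \cong F[H/\Agemo(H')]$, and finish via \cref{lem:G'HasOrderP} and Drensky's result. One small caveat: your assertion that $\bar G$ has derived subgroup of order $p$, and hence your appeal to \cref{lem:G'HasOrderP}, tacitly assumes $G$ is non-abelian; the paper disposes of the abelian case first with a one-line reference to \cref{th:KnownClasses} before running the argument you give.
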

\begin{proof}
    Assume $FG \cong FH$.
    The statement follows from \cref{th:KnownClasses} if $G$ is abelian, so assume $G$ is non-abelian.
    Observe that $\Delta(G')FG$ equals the ideal generated by the Lie commutators and $(\Delta(G')FG)^p = \Delta(\Agemo(G'))FG$.
    As $H'$ is also cyclic by \cref{prop:CyclicG'Invariant}, the ideal $\Delta(\Agemo(G'))FG$ is mapped to the ideal $\Delta(\Agemo(H'))FH$ under the isomorphism $FG \to FH$ and thus
    \begin{equation}\label{eq:IsoForGmodAgemo}
        F[G/\Agemo(G')] \cong FG/\Delta(\Agemo(G'))FG \cong FH/\Delta(\Agemo(H'))FH \cong F[H/\Agemo(H')].
    \end{equation}
    As $G$ is non-abelian two-generated, so is $G/\Agemo(G')$.
    Hence, by \cref{lem:G'HasOrderP} the center of the group $G/\Agemo(G')$ has index $p^2$ and $G/\Agemo(G')$ lies in a class for which MIP over all fields holds by \cref{th:KnownClasses}.
    Thus by \eqref{eq:IsoForGmodAgemo} we conclude $G/\Agemo(G') \cong H/\Agemo(H')$.
\end{proof}

\begin{thm}\label{thm:Metacyclic}
    Let $G$ be a finite metacyclic $p$-group and $F$ a field of characteristic~$p$.
    If $FG \cong FH$ for some group $H$, then $G \cong H$.
\end{thm}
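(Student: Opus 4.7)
The plan is to run Sandling's argument~\cite{Sandling96} with the $F$-invariants from~\cref{prop:CyclicG'Invariant,prop:GModAgemoG'} replacing the $\mathbb{F}_p$-invariants on which he relied. Assume $FG \cong FH$. The minimal number of generators of a finite $p$-group is the rank of its Frattini quotient, hence an $F$-invariant by~\cref{prop:BasicInvariants}. Since a metacyclic $p$-group has rank at most $2$, either $G$ is cyclic---and then $G$ is abelian and $G \cong H$ follows from~\cref{th:KnownClasses}(\ref{it:Drensky})---or both $G$ and $H$ are two-generated, which we assume henceforth, together with $G$ being non-abelian.

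Next I would show that $H$ is metacyclic: \cref{prop:CyclicG'Invariant} gives $H' \cong G'$ cyclic, so $\Frat(G') = \Agemo(G')$ and the same holds for $H$; \cref{prop:GModAgemoG'} then supplies $G/\Frat(G') \cong H/\Frat(H')$. The left-hand side is a quotient of the metacyclic group $G$ and is thus itself metacyclic, so $H/\Frat(H')$ is metacyclic, and~\cref{lem:MetacyclicModFratG'} forces $H$ to be metacyclic as well.

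At this stage, $G$ and $H$ are two-generated metacyclic $p$-groups sharing their cyclic derived subgroups, their abelianizations, their centers (\cref{prop:BasicInvariants}), their exponents (\cref{prop:InvariantsCC}) and the quotients $G/\Frat(G') \cong H/\Frat(H')$. The remaining task is purely group-theoretical: within the class of two-generated metacyclic $p$-groups, this packet of abelian invariants must determine the isomorphism type. For odd $p$ this is essentially immediate from Lindenberg's presentation of such groups, and this is where Sandling's proof concludes.

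The main obstacle lies in the case $p = 2$, which is exactly where the earlier appeal to~\cite[Theorem 3.1]{Shalev90} was faulty: using the Newman--Xu classification one has to split the analysis into families (split vs.\ non-split extensions, and the maximal-class families $Q_{2^n}$, $D_{2^n}$, $SD_{2^n}$, which are already handled by the fourth item of~\cref{th:KnownClasses}) and verify in each of them that the listed $F$-invariants suffice to pin down $G$. Only once this case distinction is completed can one conclude $G \cong H$.
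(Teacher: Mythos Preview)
Your reduction showing that $H$ is metacyclic is correct and agrees with the paper. But you mislocate the gap: the faulty appeal to~\cite[Theorem 3.1]{Shalev90} occurred precisely in the step you have just repaired---namely, concluding over a general $F$ that $H$ is metacyclic (Sandling does this over $\mathbb{F}_p$, and the claim in~\cite{Margolis22} that it transfers to any $F$ rested on the misreading of Shalev for $p=2$). It was \emph{not} in the subsequent classification step.

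Once $G$ and $H$ are both known to be metacyclic, Sandling's proof of~\cite[Theorem~5]{Sandling96}, based on King's classification~\cite{King73}, already runs verbatim over any field: the only invariants invoked there are $G/G'$, $Z(G)$, the number of conjugacy classes that are squares (for $p=2$), and the positive MIP for $2$-groups of maximal class, all of which are $F$-invariants by~\cref{prop:BasicInvariants,prop:InvariantsCC,th:KnownClasses}. Hence no fresh Newman--Xu case analysis is needed, and your final paragraph leaves open work that is in fact already done. In particular the square-class count from~\cref{prop:InvariantsCC}, which you omit from your list, is what Sandling actually uses at $p=2$; the exponent and the quotient $G/\Frat(G')$ that you record play no role in that final step.
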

\begin{proof}
    Since $G$ is metacyclic, it is two-generated and $G'$ is cyclic.
    By~\cref{prop:GModAgemoG'} we have $G/\Agemo(G') \cong H/\Agemo(H')$.
    Applying \cref{lem:MetacyclicModFratG'} yields that $H$ is also metacyclic.

    From here on, we can follow Sandling's proof of \cite[Theorem 5]{Sandling96} verbatim, as he only uses the $F$-invariants $G/G'$, $Z(G)$ and the number of conjugacy classes which are squares for $p=2$, together with the fact the MIP holds for $2$-groups of maximal class.
    All of these arguments can hence be repeated over all fields by \cref{prop:BasicInvariants} and \cref{prop:InvariantsCC}.
    We note that Sandling relied on a classification of metacyclic $p$-groups given in \cite{King73} which was noted to have some flaws in \cite[Remark 3.4]{XuZhang06}.
    However, checking classifications of metacyclic $p$-groups which appeared later, such as \cite{XuZhang06} or \cite{Hempel00}, we see that the relevant \cite[Theorem 3.3]{King73} remains correct.
    The verification of this fact is elementary though tedious.
    Hence Sandling's proof still works and the result follows.
\end{proof}

\subsection{Finite $3$-groups of maximal class}
In this section we reprove the main result from a paper of Bagi\'nski and Kurdics~\cite{BaginskiKurdics19} generalizing the coefficient field.
Most probably their proof works actually over any field, but we present a proof of this fact which is shorter and also uses an $F$-invariant rarely used in manual calculations before.
Namely, this is to our knowledge only the second time, after \cite{BaginskiKonovalov04}, that the $F$-invariant $\dim HH^1(FG)$, i.e.\ the dimension of the first Hochschild cohomology group of the group algebra, is applied in explicit calculations (it has been a part of computer aided investigations already since \cite{Wursthorn93}).

We call a group of order $3^n$ of maximal class if it has nilpotency class $n - 1$ and $n \geq 4$ for convenience.
We write the groups of order $3^n$ and of maximal class as quotients of the following infinite group
\[
    \mathcal{G}_n = \gen{ a, b, c, d \given c = [b, a], \ d = [c, a], \ [d, a] = c^{-3}d^{-3}, \ [d, b] = [d, c] = 1 }
\]
where additionally the following relations hold in $\mathcal{G}_n$:
\[
    \begin{cases}
        c^{3^{(n-2)/2}} = d^{3^{(n-2)/2}} = 1 & (\text{$n$ is even}), \\
        c^{3^{(n-1)/2}} = d^{3^{(n-3)/2}} = 1 & (\text{$n$ is odd}).
    \end{cases}
\]

Set
\[
    z =
    \begin{cases}
        d^{(-3)^{(n-4)/2}} & (\text{$n$ is even}), \\
        c^{(-3)^{(n-3)/2}} & (\text{$n$ is odd}).
    \end{cases}
\]

We now define the seven series of groups of interest as quotients of $\mathcal{G}_n$ by additional relations as given in~\cref{tbl:Max3Groups}.
The series of groups $T_1$, \dots, $T_7$ are the same as in the original paper, and in the notation of~\cite{BaginskiKurdics19} isomorphisms from our groups to the original groups are given explicitly as
\[
    a \mapsto s,\quad b \mapsto s_1,\quad c \mapsto s_2,\quad d \mapsto s_3,\quad z \mapsto s_{n - 1}.
\]

\begin{table}[h]
    \centering
    \begin{tabular}{lccccccc}
        \toprule
                & $T_1$          & $T_2$           & $T_3$                & $T_4$          & $T_5$          & $T_6$          & $T_7$          \\
        \midrule
        $a^3$   & 1              & 1               & 1                    & $z$            & 1              & $z$            & $z^{-1}$       \\
        $b^3$   & $c^{-3}d^{-1}$ & $c^{-3}d^{-1}z$ & $c^{-3}d^{-1}z^{-1}$ & $c^{-3}d^{-1}$ & $c^{-3}d^{-1}$ & $c^{-3}d^{-1}$ & $c^{-3}d^{-1}$ \\
        $[c,b]$ & 1              & 1               & 1                    & 1              & $z^{-1}$       & $z^{-1}$       & $z^{-1}$       \\
        \bottomrule
    \end{tabular}
    \caption{Distinct relations of the $3$-groups of maximal class.}\label{tbl:Max3Groups}
\end{table}

We note that $T_2$ and $T_3$ are isomorphic, if $n$ is odd.
Moreover, the groups $T_5$, $T_6$ and $T_7$ are only defined if $n \geq 5$.

Note that $Z(G) = \langle z \rangle$.
Throughout this section we write \[ \text{$N = \langle b, c, d \rangle$ and $M = \langle c^3, d \rangle$} \] for short.
The conjugacy classes and centralizers of $3$-groups of maximal class can be described as in \cref{tbl:ConjugacyClassesT1-T4,tbl:ConjugacyClassesT5-T7}.
\begin{table}[htbp]
    \centering
    \begin{tabular}{lccc}
        \toprule
        $E$                         & $Z(G)$ & $N \setminus Z(G)$ & $G \setminus N$           \\
        \midrule
        Number of elements          & $3$    & $3^{n-1} - 3$      & $3^n - 3^{n-1}$           \\
        Number of conjugacy classes & $3$    & $3^{n-2} - 1$      & $6$                       \\
        Class length                & $1$    & $3$                & $3^{n-2}$                 \\
        $|C_G(g)| \ (g \in E)$      & $3^n$  & $3^{n-1}$          & $9$                       \\
        $C_G(g)   \ (g \in E)$      & $G$    & $N$                & $\langle g, Z(G) \rangle$ \\
        \bottomrule
    \end{tabular}
    \caption{Conjugacy classes and centralizers of $G = T_1, \dots, T_4$.}\label{tbl:ConjugacyClassesT1-T4}
\end{table}
\begin{table}[htbp]
    \centering
    \begin{tabular}{lcccc}
        \toprule
        $E$                         & $Z(G)$ & $M \setminus Z(G)$ & $N \setminus M$        & $G \setminus N$           \\
        \midrule
        Number of elements          & $3$    & $3^{n-3} - 3$      & $3^{n-1} - 3^{n-3}$    & $3^n - 3^{n-1}$           \\
        Number of conjugacy classes & $3$    & $3^{n-4} - 1$      & $3^{n-3} - 3^{n-5}$    & $6$                       \\
        Class length                & $1$    & $3$                & $9$                    & $3^{n-2}$                 \\
        $|C_G(g)| \ (g \in E)$      & $3^n$  & $3^{n-1}$          & $3^{n-2}$              & $9$                       \\
        $C_G(g)   \ (g \in E)$      & $G$    & $N$                & $\langle g, M \rangle$ & $\langle g, Z(G) \rangle$ \\
        \bottomrule
    \end{tabular}
    \caption{Conjugacy classes and centralizers of $G = T_5, \dots, T_7$.}\label{tbl:ConjugacyClassesT5-T7}
\end{table}

\begin{prop}\label{prop:RoggenkampParameter}
    Let $G$ be a group of order $3^n$ and of maximal class.
    Then the dimension of the first Hochschild cohomology group of $FG$ is as follows:
    \[
        \dim HH^1(FG) =
        \begin{cases}
            16 + 2 \cdot 3^{n-2}  & (G \cong T_1),                         \\
            12 + 2 \cdot 3^{n-2}  & (\text{$G \cong T_2$ and $n \neq 4$}), \\
            38                    & (\text{$G \cong T_2$ and $n = 4$}),    \\
            12 + 2 \cdot 3^{n-2}  & (G \cong T_3),                         \\
            10 + 2 \cdot 3^{n-2}  & (G \cong T_4),                         \\
            12 + 22 \cdot 3^{n-5} & (G \cong T_5),                         \\
            10 + 22 \cdot 3^{n-5} & (G \cong T_6),                         \\
            14 + 22 \cdot 3^{n-5} & (G \cong T_7).                         \\
        \end{cases}
    \]
\end{prop}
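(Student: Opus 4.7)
The plan is to compute each $\dim HH^1(FG)$ via the group-theoretical expression in \cref{prop:CohomInvariants}, which reduces the problem to computing the ranks (i.e.\ $r(H) \coloneqq \log_p|H/\Frat(H)|$) of the centralizers of all conjugacy class representatives. Using \cref{tbl:ConjugacyClassesT1-T4,tbl:ConjugacyClassesT5-T7}, every centralizer has one of four shapes: the full group $G$ (for central elements), the subgroup $N$ (for the classes in $N\setminus Z(G)$ when $G\cong T_1,\dots,T_4$ and for $M\setminus Z(G)$ when $G\cong T_5,T_6,T_7$), the subgroup $\langle g,M\rangle$ of order $3^{n-2}$ (for $g\in N\setminus M$ in $T_5,T_6,T_7$), or $\langle g, Z(G)\rangle$ of order $9$ (for $g\in G\setminus N$). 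Throughout, $r(G)=2$ since $G=\langle a,b\rangle$.

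The next step is to determine $r(N)$ and, where relevant, $r(\langle g,M\rangle)$. In the families $T_1,\dots,T_4$ the table forces $N$ to be abelian, because the centralizer of each non-central element of $N$ equals $N$. Using the presentation together with the relation expressing $b^3$ as an element of $\Agemo(N)=\langle c^3,d^3\rangle$ (and the orders of $c,d$ prescribed by the parity of $n$), one checks that $N/\Agemo(N)$ is elementary abelian of rank~$2$ generated by the images of $b$ and $c$, so $r(N)=2$. In $T_5,T_6,T_7$ the relation $[c,b]=z^{-1}\in\langle z\rangle\le M$ shows $M\le Z(N)$, and a similar Frattini computation gives $r(N)=2$ and $r(\langle g,M\rangle)=2$ for every $g\in N\setminus M$.

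The remaining and most delicate step is to analyze the six conjugacy classes in $G\setminus N$ for each family. For $g\in G\setminus N$ the abelian group $\langle g,Z(G)\rangle$ has order $9$, hence is isomorphic to $C_3\times C_3$ (giving rank~$2$) when $g^3=1$, and to $C_9$ (giving rank~$1$) when $g^3\in Z(G)\setminus\{1\}$. The six classes have representatives of the form $a^i b^j$ with $i\in\{1,2\}$ and $j\in\{0,1,2\}$; using $c=[b,a]$, $d=[c,a]$ and the Hall--Petrescu collection formula one computes $(a^i b^j)^3$ modulo $Z(G)$ and identifies its residue in $Z(G)=\langle z\rangle$ from the explicit relations in \cref{tbl:Max3Groups}. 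Summing $6+(\#\text{classes in }N\setminus Z(G))\cdot r(N)+(\#\text{classes in }N\setminus M)\cdot 2+(\text{total rank over }G\setminus N)$ and matching against the stated values completes the proof.

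The main obstacle is precisely this final enumeration: the seven relation sets differ only by the exponents of $z$ appearing in $a^3$, $b^3$ and $[c,b]$, so the pattern of element orders among the six coset representatives of $N$ shifts subtly from family to family, with a non-generic collapse when $G\cong T_2$ and $n=4$. In that case $z=d$ and the relation $b^3=c^{-3}d^{-1}z=c^{-3}$ combined with $c^3=1$ forces $b^3=1$, and a direct check shows that every element of $G\setminus N$ has order $3$, so all six centralizers contribute rank~$2$; this produces the exceptional value $38$ instead of the generic $12+2\cdot 3^{n-2}$. The remaining six formulae follow by the same bookkeeping once the number of order-$9$ representatives among the six classes in each family is determined.
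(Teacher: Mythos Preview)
Your overall strategy is exactly the paper's: use \cref{prop:CohomInvariants} and split the sum according to the centralizer types in \cref{tbl:ConjugacyClassesT1-T4,tbl:ConjugacyClassesT5-T7}. The error is in your treatment of the exceptional case $G\cong T_2$, $n=4$.

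You assert that $r(N)=2$ for all of $T_1,\dots,T_4$, arguing that $b^3$ lies in $\langle c^3,d^3\rangle$ so that $N/\Agemo(N)$ is generated by the images of $b$ and $c$. But the issue is whether $d$ lies in $\Agemo(N)$, and this fails exactly for $T_2$ with $n=4$: there $z=d$, so $b^3=c^{-3}d^{-1}z=c^{-3}=1$ (since $c^3=1$), and hence $\Agemo(N)=1$ while $N=\langle b,c,d\rangle\cong C_3\times C_3\times C_3$ has rank~$3$. In every other case for $T_1,\dots,T_4$ one does get $d\in\Agemo(N)$ and $r(N)=2$; the paper isolates precisely this exception.

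Your explanation of the value $38$ is therefore wrong on two counts. First, the claim that every element of $G\setminus N$ has order~$3$ for $T_2$, $n=4$ is false: a collection computation (the paper's is $(ab^{\pm1})^3=a^3b^{\pm3}c^{\pm3}d^{\pm1}[c,b]^{-1}$) gives $(ab)^3=z\neq 1$ for $T_2$ regardless of $n$, so four of the six classes in $G\setminus N$ still have cyclic centralizer $C_9$. The Type~4 contribution for $T_2$ is $8$ for every $n$, including $n=4$. Second, even granting your claim, the arithmetic does not close: with $r(N)=2$ and all six outer classes of rank~$2$ you would get $6+2\cdot(3^{2}-1)+12=34$, not $38$. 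The extra $8$ over the generic $12+2\cdot 3^{n-2}=30$ comes from the $3^{n-2}-1=8$ classes with centralizer $N$, each contributing $3$ instead of $2$.

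So the fix is: in the Type~2 analysis for $T_1,\dots,T_4$, argue that $d\in\Frat(N)$ from the relation for $b^3$ together with $z\in\Frat(N)$ when $n\geq 5$, and single out $T_2$, $n=4$ as the unique case where $r(N)=3$. The Type~4 analysis is uniform in $n$ for each family.
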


\begin{proof}
    We will rely on the fundamental formula \[ \dim HH^1(FG) = \sum_{g^G} \dim H^1(C_G(g), F) = \sum_{g^G} \log_p |C_G(g)/\Frat(C_G(g))|, \] where the sums run over all the conjugacy classes $g^G$ of $G$, cf.~\cref{prop:CohomInvariants}.
    Hence we have to count the minimal number of generators of $C_G(g)$ for each $g \in G$.
    While in general this might need many case distinctions, it turns out that in the class studied here it is a rather short and simple calculation.

    There are three types of conjugacy classes for the groups $T_1$, $T_2$, $T_3$ and $T_4$, i.e.\ those with a maximal subgroup which is abelian, and one more type of class for the groups $T_5$, $T_6$ and $T_7$.
    We go through these types one by one.

    \case{Type 1: $C_G(g) = G$}
    In each case $Z(G) = \langle z \rangle \cong C_3$.
    So there are three such conjugacy classes and their centralizer $G$ is two-generated.
    The contribution to $\dim HH^1(FG)$ is hence $3 \cdot 2 = 6$ in all cases.

    \case{Type 2: $C_G(g) = N$}
    The class length is $3$.
    We always have $b^3, c^3 \in \Frat(C_G(g))$.
    If $n \geq 5$, then the formulas for $z$ also imply $z \in \Frat(C_G(g))$ and for $n=4$ we have $d = z$.
    Looking on all the cases for $b^3$ we see that $d \in \Frat(C_G(g))$, except for $n=4$ and $G \cong T_2$ in which case $b^3 = c^{-3}$.
    Hence $C_G(g)$ is always two-generated, except for $n=4$ and $G \cong T_2$ when it is three-generated.
    In $T_1$, \dots, $T_4$ the elements studied in this case are those in the maximal subgroup $N \setminus Z(G)$, i.e.\ there are $3^{n-1}-3$ elements, giving $3^{n-2}-1$ classes.
    In $T_5$, $T_6$ and $T_7$ the elements are those of $M \setminus Z(G)$, i.e.\ there are $3^{n-3}-3$ elements giving $3^{n-4}-1$ classes.
    Overall the contribution to $\dim HH^1(FG)$ is as follows:
    \[
        \begin{cases}
            2 \cdot (3^{n-2}-1) & (\text{$G \cong T_1, T_3, T_4$, or $G \cong T_2$ and $n \neq 4$}), \\
            3 \cdot (3^{n-2}-1) & (\text{$G \cong T_2$ and $n = 4$}),                                \\
            2 \cdot (3^{n-4}-1) & (G \cong T_5, T_6, T_7).
        \end{cases}
    \]

    \case{Type 3: $C_G(g) = \langle g, M \rangle$}
    This comes up only for the groups $T_5$, $T_6$ and $T_7$ and in those it is the property of the elements in $N \setminus M$.
    So there are $3^{n-1}-3^{n-3}$ elements and as each class has length $9$, this gives $3^{n-3} - 3^{n-5}$ classes.
    We claim that in all cases $C_G(g)$ is two-generated.
    To see this, let $g = b^j c^k h$ be a representative of such a class with $h \in M$ and $0 \leq j, k < 3$, but $jk \neq 0$.
    Then, as $b^3 = c^{-3}d^{-1}$, we get $g^3 = c^{3(k-j)}d^{-j} h^3$.
    As $g^3, h^3 \in \Frat(C_G(g))$, this means $c^{3(k-j)}d^{-j} \in \Frat(C_G(g))$.
    So $c^3$ and $d$ are linearly dependent modulo $\Frat(C_G(g))$, implying that $C_G(g)$ can be generated by fewer than three elements.
    Overall this type of elements hence gives a contribution to $\dim HH^1(FG)$ of exactly $2 \cdot (3^{n-3} - 3^{n-5})$.

    \case{Type 4: $C_G(g) = \langle g, Z(G) \rangle$}
    This covers all the elements in $G \setminus N$, i.e.\ there are $3^n-3^{n-1}$ elements with a class length of $3^{n-2}$, which gives $6$ classes.
    Since $g^G = gG'$ representatives of these classes are given by $a$, $a^{-1}$, $ab$, $ab^{-1}$, $a^{-1}b$ and $a^{-1}b^{-1}$.
    We need to determine whether $C_G(g)$ is cyclic which is the same as deciding if $g^3 \neq 1$.
    This property is clearly the same for $g$ and $g^{-1}$, so it is enough if we pick three of the six elements up to inverses.
    Note that $(ab)^{-1} = b^{-1}a^{-1} = (a^{-1})^{b}b^{-1} = (a^{-1}b^{-1})^b$ and similarly $(ab^{-1})^{-1} = (a^{-1}b)^{b^{-1}}$.
    Hence we need only to compute the cubes of $a$, $ab$ and $ab^{-1}$.
    The first of these is included in the defining relations giving
    \[
        a^3 =
        \begin{cases}
            1         & (G \cong T_1, T_2, T_3, T_5), \\
            z^{\pm 1} & (G \cong T_4, T_6, T_7).
        \end{cases}
    \]

    For the other cubes a straightforward calculation gives
    \[
        (ab^{\pm 1})^3 = a^3 b^{\pm 3} c^{\pm 3} d^{\pm 1} [c, b]^{-1} =
        \begin{cases}
            1         & (G \cong T_1, T_7),                \\
            z^{\pm 1} & (G \cong T_2, T_3, T_4, T_5, T_6).
        \end{cases}
    \]
    Overall the contribution to $\dim HH^1(FG)$ from this type of elements is $12$ for $G \cong T_1$, $8$ for $G \cong T_2, T_3, T_5$, $6$ for $G \cong T_4, T_6$ and $10$ for $G \cong T_7$.

    The values we obtained are summarized in~\cref{tbl:dimHH1}.
    Summing up we obtain the claimed values for $\dim HH^1(FG)$ in each case.
    \begin{table}[h]
        \begin{tabular}{lcccc}
            \toprule
                              & Type 1      & Type 2                    & Type 3                            & Type 4 \\
            \midrule
            $T_1$             & $2 \cdot 3$ & $2 \cdot (3^{n - 2} - 1)$ &                                   & $12$   \\
            $T_2\ (n \neq 4)$ & $2 \cdot 3$ & $2 \cdot (3^{n - 2} - 1)$ &                                   & $8$    \\
            $T_2\ (n = 4)$    & $2 \cdot 3$ & $3 \cdot (3^{n - 2} - 1)$ &                                   & $8$    \\
            $T_3$             & $2 \cdot 3$ & $2 \cdot (3^{n - 2} - 1)$ &                                   & $8$    \\
            $T_4$             & $2 \cdot 3$ & $2 \cdot (3^{n - 2} - 1)$ &                                   & $6$    \\
            $T_5$             & $2 \cdot 3$ & $2 \cdot (3^{n - 4} - 1)$ & $2 \cdot (3^{n - 3} - 3^{n - 5})$ & $8$    \\
            $T_6$             & $2 \cdot 3$ & $2 \cdot (3^{n - 4} - 1)$ & $2 \cdot (3^{n - 3} - 3^{n - 5})$ & $6$    \\
            $T_7$             & $2 \cdot 3$ & $2 \cdot (3^{n - 4} - 1)$ & $2 \cdot (3^{n - 3} - 3^{n - 5})$ & $10$   \\
            \bottomrule
        \end{tabular}
        \caption{Summary of contributions of $\log_p |C_G(g)/\Frat(C_G(g))|$ in $\dim HH^1(FG)$ for each group and conjugacy class type.}\label{tbl:dimHH1}
    \end{table}
\end{proof}

\begin{thm}\label{thm:BaginskiKurdics}
    Let $G$ be a group of order $3^n$ and of maximal class.
    If $FG \cong FH$ for some group $H$ and $G \not\cong H$, then $n$ is even, $n \geq 6$, $G \cong T_2$ and $H \cong T_3$, up to exchange.
\end{thm}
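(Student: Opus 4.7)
The plan is to exploit $\dim HH^1(FG)$, which is an $F$-invariant by \cref{prop:CohomInvariants} and whose explicit values on the seven families $T_1, \ldots, T_7$ have just been computed in \cref{prop:RoggenkampParameter}, together with the invariance of nilpotency class for maximal class groups recorded in \cref{prop:Class}. Once these two ingredients are in place, the theorem reduces to a brief numerical case analysis.

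First I would show that $H$ is itself a $3$-group of maximal class of order $3^n$. Indeed $|H| = \dim_F FH = \dim_F FG = 3^n$, so $H$ is a $3$-group, and since $G$ has maximal class, \cref{prop:Class} forces the nilpotency class of $H$ to equal $n-1$, so $H$ is of maximal class as well. Consequently $H \cong T_j$ for some $j \in \{1, \ldots, 7\}$, subject to the standing conventions that $T_2 \cong T_3$ when $n$ is odd and that $T_5, T_6, T_7$ are defined only for $n \geq 5$.

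Next I would compare $\dim HH^1(FG)$ with $\dim HH^1(FH)$ using \cref{prop:RoggenkampParameter}. For $n \geq 5$ the values split into two families: $c + 2 \cdot 3^{n-2}$ with $c \in \{16, 12, 12, 10\}$ for $T_1, T_2, T_3, T_4$, and $c' + 22 \cdot 3^{n-5}$ with $c' \in \{12, 10, 14\}$ for $T_5, T_6, T_7$. The difference between a value in the first family and one in the second equals $c - c' + 32 \cdot 3^{n-5}$, which is strictly positive already for $n = 5$ since $|c - c'| \leq 6 < 32$, ruling out any cross-family coincidence. Within the first family the only repeated value of $c$ is $12$, realised exactly by $T_2$ and $T_3$; within the second family the three values of $c'$ are pairwise distinct. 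The exceptional case $n = 4$ is settled by direct inspection: $\dim HH^1(FT_i)$ takes the distinct values $34, 38, 30, 28$ for $i = 1, 2, 3, 4$.

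The hard part, in principle, would be to separate $T_2$ from $T_3$ in the regime where $n \geq 6$ is even, since $\dim HH^1$ cannot distinguish them there; but the conclusion of the theorem asserts that this separation is not needed, so every case that the invariants above do not already resolve is exactly the exceptional pair $\{T_2, T_3\}$ declared in the statement.
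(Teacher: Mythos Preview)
Your proof is correct and follows essentially the same approach as the paper: reduce to maximal class via \cref{prop:Class}, then invoke the explicit values of $\dim HH^1$ from \cref{prop:RoggenkampParameter} to eliminate all pairs except $\{T_2,T_3\}$, and finally use $T_2\cong T_3$ for odd $n$ to force $n$ even and $n\geq 6$. The paper is terser in the numerical comparison, simply asserting that \cref{prop:RoggenkampParameter} leaves only $\{T_2,T_3\}$ with $n\geq 5$, whereas you spell out the family split and the inequality $|c-c'|\leq 6 < 32\cdot 3^{n-5}$; but the substance is identical.
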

\begin{proof}
    By \cref{prop:Class} the group $H$ is also a $3$-group of maximal class of the same order as $G$.
    As also $\dim HH^1(FG) = \dim HH^1(FH)$ by \cref{prop:CohomInvariants}, \cref{prop:RoggenkampParameter} implies $n \geq 5$ and $G \cong T_2$ and $H \cong T_3$, up to exchange.
    Since $T_2$ and $T_3$ are isomorphic if $n$ is odd, we have $n$ is even and $n \geq 6$.
\end{proof}

\subsection{Two-generated finite $p$-groups of nilpotency class two}
The MIP was studied for the class of two-generated groups of nilpotency class two by Broche and del R\'io in \cite{BrocheDelRio21}.
They obtained a positive answer for this class over prime fields, but they partly relied on kernel size techniques which are sensible to the ground field.
Here we solve the MIP for this class of groups over any field, relying in particular on two results from \cite{GarciaLucas24, GarciaLucasMargolis24} which appeared after the original paper~\cite{BrocheDelRio21}.

\begin{thm}\label{th:BrocheDelRio}
    Let $G$ be a two-generated finite $p$-group of nilpotency class two and $F$ a field of characteristic~$p$.
    If $FG \cong FH$ for some group $H$, then $G \cong H$.
\end{thm}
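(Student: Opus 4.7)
The plan is to first argue that $H$ inherits all the key structural features of $G$---two-generated, nilpotency class $2$, cyclic derived subgroup with $H' \cong G'$, and $H/H' \cong G/G'$---and then to pin down the remaining data within this restricted class. Two-generation of $H$ will follow from the $F$-invariance of the rank, remarked after \cref{prop:BasicInvariants}. Cyclicity of $G'$ together with its isomorphism type are $F$-invariant by \cref{prop:CyclicG'Invariant}. Since $G'$ is cyclic, the nilpotency class is $F$-invariant by \cref{prop:Class}(2), so $H$ is also of class $2$. Finally, $G/G' \cong H/H'$ is provided by \cref{prop:BasicInvariants}.

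For $p$ odd, this reduction already closes the proof: by the footnote to \cref{th:KnownClasses}(2), the arguments of \cite{BrocheDelRio21} are valid over any field when $p$ is odd. Only $p = 2$ requires extra work. I would then parametrize $G = \langle a, b \mid a^{2^{n_1}} = z^{\alpha},\ b^{2^{n_2}} = z^{\beta},\ [a, b] = z,\ z^{2^c} = 1,\ z \text{ central}\rangle$ with $n_1 \geq n_2 \geq c \geq 1$; the triple $(n_1, n_2, c)$ is already recovered from the previous paragraph, so it suffices to determine $(\alpha, \beta)$ up to an isomorphism of $G$. A direct computation gives $Z(G) = \langle a^{2^c}, b^{2^c}, z \rangle$ and hence $G/Z(G) \cong C_{2^c} \times C_{2^c}$.

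I would now split on the structure of $Z(G)$. If $Z(G)$ is cyclic, \cref{th:KnownClasses}(3) applies: the invariant-factor sequence of $G/Z(G)$ has only two terms (so read $m_3 = 0$), and the required inequality $m_1 > m_3$ reduces to $c > 0$, which holds because $G$ is non-abelian. If $Z(G)$ is non-cyclic, I would use the $F$-invariants from \cref{cor:TransferLemmaInvariants}, notably $Z(G) \cap \Agemo_{k}(G)G'$ and $Z(G)\Agemo_{k}(G)G'/\Agemo_{k}(G)G'$ for varying $k$; these track the $2$-adic valuations of $\alpha$ and $\beta$ via the torsion of $a^{2^c}$ and $b^{2^c}$ inside $Z(G)$. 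Together with $(n_1, n_2, c)$ this will determine the isomorphism type of $G$ within the class of two-generated class $2$ $2$-groups with cyclic derived subgroup.

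The main obstacle is precisely this non-cyclic-center sub-case at $p = 2$. The original proof in \cite{BrocheDelRio21} handles it through kernel-size counts, which are sensitive to the ground field (cf.~\textit{Difference 1} in the preceding section). Replacing those counts by the transfer-lemma invariants above is viable because $Z(G)$ has rank $2$ in this sub-case, so the $\Omega_k$ and $\Agemo_k$ invariants give independent information on the two cocycle coordinates; the remaining work will be a finite but tedious verification against the classification of the relevant groups.
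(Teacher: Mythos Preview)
Your reduction and the cyclic-center sub-case are correct and match the paper's approach (the paper's Case~1 is exactly the situation $Z(G)=G'$ cyclic with $G/Z(G)\cong C_{2^m}\times C_{2^m}$, and it too invokes the result behind \cref{th:KnownClasses}(3)). The gap is in the non-cyclic-center sub-case: the invariants you name from \cref{cor:TransferLemmaInvariants} do \emph{not} separate the relevant pair of groups.

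Concretely, take $n_1=n>m=n_2=c$ and compare $(\alpha,\beta)=(0,2^{m-1})$ with $(\alpha,\beta)=(0,0)$; this is precisely the pair the paper calls Case~2. In both groups one has $b^{2^m}\in G'$, so $G/G'\cong C_{2^n}\times C_{2^m}$, $Z(G)=\langle a^{2^m},z\rangle\cong C_{2^{n-m}}\times C_{2^m}$, and $\Agemo_k(G)G'=\langle a^{2^k},b^{2^k},z\rangle$, which for $k\geq m$ collapses to $\langle a^{2^k},z\rangle$ regardless of whether $b^{2^m}$ is $z^{2^{m-1}}$ or $1$. A direct check then gives, for every $k$, the same isomorphism types for $Z(G)\cap\Agemo_k(G)G'$, for $Z(G)\Agemo_k(G)G'/\Agemo_k(G)G'$, and likewise for the $\Agemo_k(Z(G))G'$- and $\Omega_k(Z(G))G'$-type invariants. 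The difference between the two groups lives entirely inside $G'$, where these center/Agemo invariants cannot see it; so your claim that they ``track the $2$-adic valuations of $\alpha$ and $\beta$'' fails exactly here.

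What the paper does instead is use \cref{th:TransferLemma}(1) to pass to $U=\Omega_m(G:G')$ and then apply \cref{prop:TransferLemmaApplications}(1) to compare the Jennings series of $U$: one finds $D_{2^m}(U)=\Agemo_m(U)=\langle b^{2^m}\rangle$, which has order $2$ when $\beta=2^{m-1}$ and order $1$ when $\beta=0$. So the fix is not a tedious verification with the invariants you listed, but a genuinely finer invariant coming from the first clause of the Transfer Lemma rather than from \cref{cor:TransferLemmaInvariants}.
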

\begin{proof}
    We analyze the proof of \cite[Theorem 1]{BrocheDelRio21} first to see how much of it works over any coefficient field.
    The first arguments in this proof involve the $F$-invariants $G/G'$, exponent, subsequent quotients of dimension subgroups and the nilpotency class two.
    All of these are invariants over any field by \cref{prop:BasicInvariants} and \cref{prop:Class}.
    They turn out to be enough to handle the case of $p$ odd and leave us with two subcases in the case $p=2$.
    We describe these cases next, which in the notation of \cite{BrocheDelRio21} correspond to the cases $s_1 = m-1$ and $s_1 = m$, though we will use different notation.
    We do not change the presentations of the groups however, just rename the generators and the parameters.

    \case{Case 1}
    In this case only one series of pairs of groups has to be considered depending on a single parameter $m$.
    These groups are
    \begin{align*}
        G                 &= \gen[\big]{ a, b \given [b,a]^{2^m} = [[b,a],a] = [[b,a],b] = 1, \ a^{2^m} = [b,a]^{2^{m-1}}, \ b^{2^m} = [b,a]^{2^{m-1}} } \\
        \intertext{and} H &= \gen[\big]{ x, y \given [y,x]^{2^m} = [[y,x],x] = [[y,x],y] = 1, \ x^{2^m} = [y,x]^{2^{m-1}}, \ y^{2^m} = 1 }.
    \end{align*}
    We first show that for both groups the center equals the derived subgroup and hence is cyclic.
    Set $c = [b, a]$ for brevity.
    It suffices to see that $Z(G) \leq G'$ as $Z(G) \geq G'$ is evident.
    Take an arbitrary central element $g = a^i b^j c^k$ ($0 \leq i, j, k < 2^m$) of $G$.
    Since $b a^i = a^i b c^i$ from the relations, $1 = [b, g] = c^i$ shows $i = 0$.
    Similarly, $b^j a = a b^j c^j$ and $1 = [g, a] = c^j$ shows $j = 0$.
    Then $g = c^k$ and hence $Z(G) \leq G'$.
    The same arguments show that $Z(H) = H'$.
    Moreover we have $G/Z(G) \cong H/Z(H) \cong C_{2^m} \times C_{2^m}$.
    Hence we can apply \cite[Theorem 1.3]{GarciaLucasMargolis24} to conclude that $FG \not\cong FH$ (in the notation of \cite{GarciaLucasMargolis24} we have $m(G) = m(H) = 2$).

    \case{Case 2}
    In this case we have series of pairs of groups depending on two parameters $m$ and $n$ which satisfy $n > m$.
    These groups are given by
    \begin{align*}
        G                 &= \gen[\big]{ a, b \given [b,a]^{2^m} = [[b,a],a] = [[b,a],b] = 1, \ a^{2^n} = 1, \ b^{2^m} = [b,a]^{2^{m-1}} } \\
        \intertext{and} H &= \gen[\big]{ x, y \given [y,x]^{2^m} = [[y,x],x] = [[y,x],y] = 1, \ x^{2^n} = 1, \ y^{2^m} = 1 }.
    \end{align*}
    Set $c = [b, a]$ for brevity.
    Let $U = \Omega_m(G : G')$ and $V = \Omega_m(H : H')$.

    On the one hand, if $FG \cong FH$ then \[ D_t(U)/D_{t+1}(U) \cong D_t(V)/D_{t+1}(V) \] for any positive integer $t$ by \cref{th:TransferLemma,prop:TransferLemmaApplications}.
    In particular, if $FG \cong FH$ then $|D_{2^m}(U)| = |D_{2^m}(V)|$.

    On the other hand, $U = \big\langle a^{2^{n-m}}, b, c \big\rangle$ as $U/G' = \Omega_m(G/G')$ and $G' = \langle c \rangle$.
    Note that $U' = \big\langle c^{2^{n-m}} \big\rangle$ as $\big[b, a^{2^{n-m}}\big] = c^{2^{n-m}}$.
    Since $U'$ is central and has exponent at most $2^{m-1}$, Lazard's formula yields \[ D_{2^m}(U) = \Agemo_m(U) = \big\langle b^{2^m} \big\rangle \] as $\big(a^{i 2^{n-m}} b^j c^k\big)^{2^m} = b^{j 2^m}$ ($0 \leq i, j, k < 2^m$).
    Then $|D_{2^m}(U)| = 2$ by $b^{2^m} = c^{2^{m-1}}$.
    The same calculation shows $D_{2^m}(V) = \big\langle y^{2^m} \big\rangle$ and thus $|D_{2^m}(V)| = 1$ by $y^{2^m} = 1$.
    Hence $FG \not\cong FH$.
\end{proof}

\textbf{Funding, Conflicts of interests/Competing interests and Thanks:}  This work has been supported by the Ram\'on y Cajal grant of the Spanish Agencia Estatal de Investigaci\'on of the first author (reference RYC2021-032471-I). 
The authors have no relevant financial or non-financial interests to disclose.
 
We moreover thank Diego Garc\'ia-Lucas for valuable discussions and the proof of \cref{thm:Metacyclic}.
We also thank the referee for a careful reading and detailed suggestions that improved the clarity of the manuscript.

\bibliographystyle{abbrv}
\bibliography{references}

\begin{thebibliography}{10}

\bibitem{Baginski88}
C.~Bagi\'nski.
\newblock The isomorphism question for modular group algebras of metacyclic
  {$p$}-groups.
\newblock {\em Proc. Amer. Math. Soc.}, 104(1):39--42, 1988.
\newblock \doi{10.2307/2047457}.

\bibitem{Baginski92}
C.~Bagi\'nski.
\newblock Modular group algebras of {$2$}-groups of maximal class.
\newblock {\em Comm. Algebra}, 20(5):1229--1241, 1992.
\newblock \doi{10.1080/00927879208824402}.

\bibitem{BaginskiKonovalov04}
C.~Bagi\'nski and A.~Konovalov.
\newblock On {$2$}-groups of almost maximal class.
\newblock {\em Publ. Math. Debrecen}, 65(1-2):97--131, 2004.
\newblock \doi{10.5486/pmd.2004.2929}.

\bibitem{BaginskiKonovalov07}
C.~Bagi\'nski and A.~Konovalov.
\newblock The modular isomorphism problem for finite {$p$}-groups with a cyclic
  subgroup of index {$p^2$}.
\newblock In {\em Groups {S}t. {A}ndrews 2005. {V}ol. 1}, volume 339 of {\em
  London Math. Soc. Lecture Note Ser.}, pages 186--193. Cambridge Univ. Press,
  Cambridge, 2007.
\newblock \doi{10.1017/CBO9780511721212.009}.

\bibitem{BaginskiKurdics19}
C.~Bagi\'nski and J.~Kurdics.
\newblock The modular group algebras of {$p$}-groups of maximal class {II}.
\newblock {\em Comm. Algebra}, 47(2):761--771, 2019.
\newblock \doi{10.1080/00927872.2018.1498860}.

\bibitem{BaginskiZabielski25}
C.~Bagi\'nski and K.~Zabielski.
\newblock The modular isomorphism problem---the alternative perspective on
  counterexamples.
\newblock {\em J. Pure Appl. Algebra}, 229(1):Paper No. 107826, 5, 2025.
\newblock \doi{10.1016/j.jpaa.2024.107826}.

\bibitem{Benson98a}
D.~J. Benson.
\newblock {\em Representations and cohomology. {I}}, volume~30 of {\em
  Cambridge Studies in Advanced Mathematics}.
\newblock Cambridge University Press, Cambridge, second edition, 1998.

\bibitem{Benson98b}
D.~J. Benson.
\newblock {\em Representations and cohomology. {II}}, volume~31 of {\em
  Cambridge Studies in Advanced Mathematics}.
\newblock Cambridge University Press, Cambridge, second edition, 1998.

\bibitem{BleherKimmerleRoggenkampWursthorn99}
F.~M. Bleher, W.~Kimmerle, K.~W. Roggenkamp, and M.~Wursthorn.
\newblock Computational aspects of the isomorphism problem.
\newblock In {\em Algorithmic algebra and number theory ({H}eidelberg, 1997)},
  pages 313--329. Springer, Berlin, 1999.
\newblock \doi{10.1007/978-3-642-59932-3\_16}.

\bibitem{Borge01}
I.~C. Borge.
\newblock {\em A cohomological approach to the classification of {$p$}-groups}.
\newblock PhD thesis, Faculty of Mathematical Sciences, University of Oxford,
  2001.
\newblock
  \url{https://ora.ox.ac.uk/objects/uuid:eaed2cd2-142b-4af5-bebb-0ba76f042b93}.

\bibitem{BovdiSzakacs95}
A.~A. Bovdi and A.~Szak\'acs.
\newblock A basis for the unitary subgroup of the group of units in a finite
  commutative group algebra.
\newblock {\em Publ. Math. Debrecen}, 46(1-2):97--120, 1995.
\newblock \doi{10.5486/pmd.1995.1503}.

\bibitem{Brauer63}
R.~Brauer.
\newblock Representations of finite groups.
\newblock In {\em Lectures on {M}odern {M}athematics, {V}ol. {I}}, pages
  133--175. Wiley, New York-London, 1963.

\bibitem{BrennerGarciaLucas24}
S.~Brenner and D.~Garc\'ia-Lucas.
\newblock On the modular isomorphism problem for groups with center of index at
  most {$p^3$}.
\newblock {\em Arch. Math. (Basel)}, 122(5):463--474, 2024.
\newblock \doi{10.1007/s00013-024-01977-z}.

\bibitem{BrocheDelRio21}
O.~Broche and {\'A}.~del R\'io.
\newblock The modular isomorphism problem for two generated groups of class
  two.
\newblock {\em Indian J. Pure Appl. Math.}, 52(3):721--728, 2021.
\newblock \doi{10.1007/s13226-021-00182-w}.

\bibitem{Carlson77}
J.~F. Carlson.
\newblock Periodic modules over modular group algebras.
\newblock {\em J. London Math. Soc. (2)}, 15(3):431--436, 1977.
\newblock \doi{10.1112/jlms/s2-15.3.431}.

\bibitem{Coleman64}
D.~B. Coleman.
\newblock On the modular group ring of a {$p$}-group.
\newblock {\em Proc. Amer. Math. Soc.}, 15:511--514, 1964.
\newblock \doi{10.2307/2034735}.

\bibitem{CreedonHughesSzabo21}
L.~Creedon, K.~Hughes, and S.~Szabo.
\newblock A comparison of group algebras of dihedral and quaternion groups.
\newblock {\em Appl. Algebra Engrg. Comm. Comput.}, 32(3):245--264, 2021.
\newblock \doi{10.1007/s00200-020-00485-1}.

\bibitem{Deskins56}
W.~E. Deskins.
\newblock Finite {A}belian groups with isomorphic group algebras.
\newblock {\em Duke Math. J.}, 23:35--40, 1956.
\newblock \doi{10.1215/S0012-7094-56-02304-3}.

\bibitem{Drensky89}
V.~Drensky.
\newblock The isomorphism problem for modular group algebras of groups with
  large centres.
\newblock In {\em Representation theory, group rings, and coding theory},
  volume~93 of {\em Contemp. Math.}, pages 145--153. Amer. Math. Soc.,
  Providence, RI, 1989.
\newblock \doi{10.1090/conm/093/1003349}.

\bibitem{ModIsom}
B.~Eick, D.~Garc\'ia-Lucas, O.~Konovalov, L.~Margolis, and T.~Moede.
\newblock Mod{I}som: a {G}{A}{P} 4 package, version 3.0.1, 2024.
\newblock \url{https://gap-packages.github.io/modisom/}.

\bibitem{Furukawa81}
T.~Furukawa.
\newblock A note on isomorphism invariants of a modular group algebra.
\newblock {\em Math. J. Okayama Univ.}, 23(1):1--5, 1981.
\newblock
  \url{https://www.math.okayama-u.ac.jp/mjou/mjou1-46/mjou_pdf/mjou_23/mjou_23_001.pdf}.

\bibitem{GAP}
{GAP} {\textendash} {G}roups, {A}lgorithms, and {P}rogramming, {V}ersion
  4.14.0.
\newblock \url{https://www.gap-system.org}, 2024.

\bibitem{GarciaLucas24}
D.~Garc\'ia-Lucas.
\newblock The modular isomorphism problem and abelian direct factors.
\newblock {\em Mediterr. J. Math.}, 21(1):Paper No. 18, 21, 2024.
\newblock \doi{10.1007/s00009-023-02557-1}.

\bibitem{GarciaLucasDelRio24}
D.~Garc\'ia-Lucas and {\'A}.~del R\'io.
\newblock A reduction theorem for the isomorphism problem of group algebras
  over fields.
\newblock {\em J. Pure Appl. Algebra}, 228(4):Paper No. 107511, 6, 2024.
\newblock \doi{10.1016/j.jpaa.2023.107511}.

\bibitem{GarciaLucasMargolis24}
D.~Garc\'ia-Lucas and L.~Margolis.
\newblock On the modular isomorphism problem for groups of nilpotency class
  {$2$} with cyclic center.
\newblock {\em Forum Math.}, 36(5):1321--1340, 2024.
\newblock \doi{10.1515/forum-2023-0237}.

\bibitem{GarciaLucasMargolisDelRio22}
D.~Garc\'ia-Lucas, L.~Margolis, and {\'A}.~del R\'io.
\newblock Non-isomorphic {$2$}-groups with isomorphic modular group algebras.
\newblock {\em J. Reine Angew. Math.}, 783:269--274, 2022.
\newblock \doi{10.1515/crelle-2021-0074}.

\bibitem{Hempel00}
C.~E. Hempel.
\newblock Metacyclic groups.
\newblock {\em Comm. Algebra}, 28(8):3865--3897, 2000.
\newblock \doi{10.1080/00927870008827063}.

\bibitem{Hertweck07}
M.~Hertweck.
\newblock A note on the modular group algebras of odd {$p$}-groups of
  {$M$}-length three.
\newblock {\em Publ. Math. Debrecen}, 71(1-2):83--93, 2007.
\newblock \doi{10.5486/pmd.2007.3586}.

\bibitem{HertweckSoriano06}
M.~Hertweck and M.~Soriano.
\newblock On the modular isomorphism problem: groups of order {$2^6$}.
\newblock In {\em Groups, rings and algebras}, volume 420 of {\em Contemp.
  Math.}, pages 177--213. Amer. Math. Soc., Providence, RI, 2006.
\newblock \doi{10.1090/conm/420/07976}.

\bibitem{Holvoet66}
R.~Holvoet.
\newblock Sur les {$Z_2$}-alg\`ebres du groupe di\'edral d'ordre {$8$} et du
  groupe quaternionique.
\newblock {\em C. R. Acad. Sci. Paris S\'er. A-B}, 262:A209--A210, 1966.
\newblock \url{https://gallica.bnf.fr/ark:/12148/bpt6k6236863n/f223.item}.

\bibitem{Jennings41}
S.~A. Jennings.
\newblock The structure of the group ring of a {$p$}-group over a modular
  field.
\newblock {\em Trans. Amer. Math. Soc.}, 50:175--185, 1941.
\newblock \doi{10.2307/1989916}.

\bibitem{King73}
B.~W. King.
\newblock Presentations of metacyclic groups.
\newblock {\em Bull. Austral. Math. Soc.}, 8:103--131, 1973.
\newblock \doi{10.1017/S0004972700045500}.

\bibitem{Margolis22}
L.~Margolis.
\newblock The modular isomorphism problem: a survey.
\newblock {\em Jahresber. Dtsch. Math.-Ver.}, 124(3):157--196, 2022.
\newblock \doi{10.1365/s13291-022-00249-5}.

\bibitem{MargolisMoede22}
L.~Margolis and T.~Moede.
\newblock The modular isomorphism problem for small groups---revisiting
  {E}ick's algorithm.
\newblock {\em J. Comput. Algebra}, 1/2:Paper No. 100001, 7, 2022.
\newblock \doi{10.1016/j.jaca.2022.100001}.

\bibitem{MargolisSakurai25}
L.~Margolis and T.~Sakurai.
\newblock Identification of non-isomorphic {$2$}-groups with dihedral central
  quotient and isomorphic modular group algebras.
\newblock {\em Rev. Mat. Iberoam.}, pages 1--30, 2025.
\newblock \doi{10.4171/RMI/1531}.

\bibitem{MargolisSakuraiStanojkovski23}
L.~Margolis, T.~Sakurai, and M.~Stanojkovski.
\newblock Abelian invariants and a reduction theorem for the modular
  isomorphism problem.
\newblock {\em J. Algebra}, 636:1--27, 2023.
\newblock \doi{10.1016/j.jalgebra.2023.08.035}.

\bibitem{MargolisStanojkovski22}
L.~Margolis and M.~Stanojkovski.
\newblock On the modular isomorphism problem for groups of class 3 and
  obelisks.
\newblock {\em J. Group Theory}, 25(1):163--206, 2022.
\newblock \doi{10.1515/jgth-2020-0174}.

\bibitem{NavarroSambale18}
G.~Navarro and B.~Sambale.
\newblock On the blockwise modular isomorphism problem.
\newblock {\em Manuscripta Math.}, 157(1-2):263--278, 2018.
\newblock \doi{10.1007/s00229-017-0990-z}.

\bibitem{NacevMollov81}
N.~A. Na\v{c}ev and T.~Z. Mollov.
\newblock An isomorphism of modular group algebras of finite {$2$}-groups for
  which the factor-group with respect to the center has order four.
\newblock {\em C. R. Acad. Bulgare Sci.}, 34(12):1633--1636, 1981.

\bibitem{PassiSehgal72}
I.~B.~S. Passi and S.~K. Sehgal.
\newblock Isomorphism of modular group algebras.
\newblock {\em Math. Z.}, 129:65--73, 1972.
\newblock \doi{10.1007/BF01229543}.

\bibitem{Passman65}
D.~S. Passman.
\newblock The group algebras of groups of order {$p^4$} over a modular field.
\newblock {\em Michigan Math. J.}, 12:405--415, 1965.
\newblock \doi{10.1307/mmj/1028999424}.

\bibitem{Passman77}
D.~S. Passman.
\newblock {\em The algebraic structure of group rings}.
\newblock Pure and Applied Mathematics. Wiley-Interscience [John Wiley \&
  Sons], New York-London-Sydney, 1977.

\bibitem{Quillen71a}
D.~Quillen.
\newblock The spectrum of an equivariant cohomology ring. {I}.
\newblock {\em Ann. of Math. (2)}, 94:549--572, 1971.
\newblock \doi{10.2307/1970770}.

\bibitem{Quillen71b}
D.~Quillen.
\newblock The spectrum of an equivariant cohomology ring. {II}.
\newblock {\em Ann. of Math. (2)}, 94:573--602, 1971.
\newblock \doi{10.2307/1970771}.

\bibitem{RitterSehgal83}
J.~Ritter and S.~Sehgal.
\newblock Isomorphism of group rings.
\newblock {\em Arch. Math. (Basel)}, 40(1):32--39, 1983.
\newblock \doi{10.1007/BF01192749}.

\bibitem{Sakurai20}
T.~Sakurai.
\newblock The isomorphism problem for group algebras: a criterion.
\newblock {\em J. Group Theory}, 23(3):435--445, 2020.
\newblock \doi{10.1515/jgth-2019-0071}.

\bibitem{SalimSandling96}
M.~A.~M. Salim and R.~Sandling.
\newblock The modular group algebra problem for groups of order {$p^5$}.
\newblock {\em J. Austral. Math. Soc. Ser. A}, 61(2):229--237, 1996.
\newblock \doi{10.1017/S1446788700000215}.

\bibitem{Sandling84}
R.~Sandling.
\newblock Units in the modular group algebra of a finite abelian {$p$}-group.
\newblock {\em J. Pure Appl. Algebra}, 33(3):337--346, 1984.
\newblock \doi{10.1016/0022-4049(84)90066-5}.

\bibitem{Sandling85}
R.~Sandling.
\newblock The isomorphism problem for group rings: a survey.
\newblock In {\em Orders and their applications ({O}berwolfach, 1984)}, volume
  1142 of {\em Lecture Notes in Math.}, pages 256--288. Springer, Berlin, 1985.
\newblock \doi{10.1007/BFb0074806}.

\bibitem{Sandling89}
R.~Sandling.
\newblock The modular group algebra of a central-elementary-by-abelian
  {$p$}-group.
\newblock {\em Arch. Math. (Basel)}, 52(1):22--27, 1989.
\newblock \doi{10.1007/BF01197966}.

\bibitem{Sandling96}
R.~Sandling.
\newblock The modular group algebra problem for metacyclic {$p$}-groups.
\newblock {\em Proc. Amer. Math. Soc.}, 124(5):1347--1350, 1996.
\newblock \doi{10.1090/S0002-9939-96-03518-6}.

\bibitem{Sehgal78}
S.~K. Sehgal.
\newblock {\em Topics in group rings}, volume~50 of {\em Monographs and
  Textbooks in Pure and Applied Mathematics}.
\newblock Marcel Dekker, Inc., New York, 1978.

\bibitem{Shalev90}
A.~Shalev.
\newblock Dimension subgroups, nilpotency indices, and the number of generators
  of ideals in {$p$}-group algebras.
\newblock {\em J. Algebra}, 129(2):412--438, 1990.
\newblock \doi{10.1016/0021-8693(90)90228-G}.

\bibitem{Wursthorn93}
M.~Wursthorn.
\newblock Isomorphisms of modular group algebras: an algorithm and its
  application to groups of order {$2^6$}.
\newblock {\em J. Symbolic Comput.}, 15(2):211--227, 1993.
\newblock \doi{10.1006/jsco.1993.1015}.

\bibitem{XuZhang06}
M.~Xu and Q.~Zhang.
\newblock A classification of metacyclic 2-groups.
\newblock {\em Algebra Colloq.}, 13(1):25--34, 2006.
\newblock \doi{10.1142/S1005386706000058}.

\bibitem{ZalesskiiMihalev73}
A.~E. Zalesski\u{\i} and A.~V. Mihalev.
\newblock Group rings.
\newblock In {\em Current problems in mathematics, {V}ol. 2 ({R}ussian)}, Itogi
  Nauki i Tekhniki, pages 5--118. (errata insert). Akad. Nauk SSSR Vsesojuz.
  Inst. Nau\v cn. i Tehn. Informacii, Moscow, 1973.
\newblock \url{https://www.mathnet.ru/rus/intd6}.

\end{thebibliography}


\end{document}